\theoremstyle{}
\newtheorem{thm}{Theorem}[section]
\newtheorem{cor}[thm]{Corollary}
\newtheorem{prop}[thm]{Proposition}
\newtheorem{prob}[thm]{Problem}
\newtheorem{conj}[thm]{Conjecture}
\theoremstyle{definition}
\theoremstyle{remark}
\newtheorem{rem}{Remark}[section]
\def\cart{\mathbin\square}
\begin{document}
\begin{titlepage}
\title{On Domatic and Total Domatic Numbers of Product Graphs}
\author{P. Francis$^{1}$ and Deepak Rajendraprasad$^{2}$}
\date{{\footnotesize Department of Computer Science, Indian Institute of Technology Palakkad, India.}\\
{\footnotesize $^{1}$: pfrancis@iitpkd.ac.in\  $^{2}$: deepak@iitpkd.ac.in }}
\maketitle
\renewcommand{\baselinestretch}{1.2}\normalsize

\begin{abstract}
A \emph{domatic} (\emph{total domatic}) \emph{$k$-coloring} of a graph $G$ is an assignment of $k$ colors
 to the vertices of $G$ such that each vertex contains vertices of all $k$ colors in its closed neighborhood (neighborhood). The \emph{domatic} (\emph{total domatic}) \emph{number} of $G$, denoted $d(G)$ ($d_t (G)$), is the maximum $k$ for which $G$ has a domatic (total domatic) $k$-coloring.
In this paper, we show that for two non-trivial graphs $G$ and $H$, the domatic and
total domatic numbers of their Cartesian product $G \cart H$ is bounded
above by $\max\{|V(G)|, |V(H)|\}$ and below by $\max\{d(G), d(H)\}$. Both these
bounds are tight for an infinite family of graphs. Further, we show that if $H$ is bipartite, then $d_t(G \cart H)$ is
bounded below by $2\min\{d_t(G),d_t(H)\}$ and $d(G \cart H)$ is
bounded below by $2\min\{d(G),d_t(H)\}$. These bounds give easy proofs for many of the known
bounds on the domatic and total domatic numbers of hypercubes \cite{chen,zel4} and the domination and total domination numbers of hypercubes \cite{har,joh} and also give new
bounds for Hamming graphs.
We also obtain the domatic (total domatic) number and domination (total domination) number of $n$-dimensional torus $\mathop{\cart}\limits_{i=1}^{n} C_{k_i}$ with some suitable conditions to each $k_i$, which turns out to be a generalization of a result due to  Gravier \cite{grav2}
and give easy proof of a result due to Klav\v{z}ar and Seifter \cite{sand}.
\end{abstract}
\noindent
\textbf{Key Words:} Cartesian product, Domatic number, Hamming graph, Injective coloring,  Torus, Total domatic number.\\
\textbf{2010 AMS Subject Classification:} 05C15, 05C69.

\section{Introduction}

All graphs considered in this paper are finite, simple, undirected and do not contain an isolated vertex.
Let $P_n,C_n$ and $ K_n$ respectively denote the path, the cycle and the complete graph on $n$ vertices. Let $\delta(G)$ and $\Delta(G)$ denote the minimum and maximum degree of a graph $G$ respectively.
The neighborhood $N(x)$ of a vertex $x$ is $\{u\colon ux\in E(G)\}$ and its closed neighborhood $N[x]$ is $N(x)\cup \{x\}$. For $S\subseteq V(G)$, 
let $\langle S\rangle$ denote the subgraph induced by $S$ in $G$. For any graph $G$, let $\overline{G}$ denote the complement of $G$.
Let $[n]$ be the set of consecutive integers $\{1,2,\ldots,n\}$ and $\mathbb{Z}_n$ be the set of congruence classes of integers modulo $n$.

The \emph{Cartesian product} of two graphs $G$ and $H$, denoted $G\cart H$, is a graph whose vertex set is $ V(G)\times V(H) =\{(x, y)\colon x\in V(G)~\mathrm{and}~ y\in V(H)\}$ and two vertices $(x_1, y_1)$ and $(x_2,y_2)$ of $G\cart H$ are adjacent if and only if either $x_1 = x_2$ and $y_1y_2 \in E(H)$ or $y_1 = y_2$ and $x_1x_2\in E(G)$.
For any vertex $u\in V(G)$, $\langle  \{u\}\times V(H)\rangle$  is isomorphic to $H$. It is called the $H$-layer of $u$ and is  denoted by $H_u$. For any vertex $v\in V(H)$, $\langle V(G)\times \{v\}\rangle$ is isomorphic to $G$ called the $G$-layer of $v$ and is denoted by $G_v$. For $d\geq2$, let $\mathop{\cart}\limits_{i=1}^{d}G_i$ denotes $G_1\cart G_2\cart\cdots \cart G_d$.
For $r_i\geq3$,  we call $\mathop{\cart}\limits_{i=1}^{d}C_{r_i}$ a $d$-dimensional \emph{torus}.
For $n\geq1,q\geq2$, the \emph{Hamming graph} $H_{n,q}$ is $\mathop{\cart}\limits_{i=1}^{n}K_{q}$.
The special case $H_{n,2}$ is a hypercube of dimension $n$, denoted as $Q_n$.

A \emph{domatic} (\emph{total domatic}) \emph{$k$-coloring} of a graph $G$ is an assignment of $k$ colors
 to the vertices of $G$ such that each vertex contains vertices of all $k$ colors in its closed neighborhood (neighborhood). The \emph{domatic} (\emph{total domatic}) \emph{number} of $G$, denoted $d(G)$ ($d_t (G)$), is the maximum $k$ for which $G$ has a domatic (total domatic) $k$-coloring.

Let $D\subset V(G)$, if $N(D)\supseteq V(G)\backslash D$ then $D$ is a dominating set of $G$ and if $N(D)=V(G)$ then $D$ is a total dominating set of $G$. The \emph{domination} (\emph{total domination}) number of a graph $G$ is the cardinality of a smallest dominating (total dominating) set of $G$ and is denoted $\gamma(G)$ ($ \gamma_t(G)$). In any domatic (total domatic) coloring of a graph $G$, each color class is a dominating (total dominating) set of $G$.  Thus the domatic and total domatic numbers can be also seen in the following way.
The \emph{domatic} (\emph{total domatic}) number of $G$ is the maximum number of classes of a partition of $V(G)$ such that each class is a dominating (total dominating) set of $G$.
There is considerable literature on  domination and total domination in graphs. See for instance, \cite{bre,grav2,hen2,ho,sand} and a survey of selected topics by Henning \cite{hen1}.

The concept of domatic number and total domatic number was introduced by Cockayne et al., in \cite{cok} and \cite{cok1} respectively, and investigated further in \cite{akb,ara,bou,chen,god,heg,koi,nag,zel3,zel2}. In \cite{zel2}, Zelinka have shown the existence of graphs with very large minimum degree have a total domatic number $1$.
Chen et al., \cite{chen} and Goddard and Henning \cite{god} have studied total domatic coloring under the names \emph{coupon coloring} and \emph{thoroughly dispersed coloring} respectively.
The motivation for study of total domatic coloring and its applications were mentioned by Chen et al., in \cite{chen}.
Further, they showed that every $d$-regular graph $G$ has $d_t(G)\geq (1 -  o(1))d/ \log d$ as $d\rightarrow\infty$, and the proportion of
$d$-regular graphs $G$ for which $d_t(G) \leq (1 + o(1))d/ \log d$ tends to $1$ as $|V(G)|\rightarrow\infty$.  In \cite{god}, Goddard and Henning have shown that the total domatic number of a planar
graph cannot exceed $4$ and conjectured that every planar triangulation $G$ on four or more vertices has $d_t(G)$ at least $2$.
There are some partial answers to this conjecture by Akbari et al., \cite{akb} and Nagy \cite{nag}.
For a bipartite graph $G$, Heggernes and Telle \cite{heg} shown that deciding whether $d_t(G)\geq2$ is NP-complete. In \cite{koi}, Koivisto et al., shown that it is NP-complete to decide whether $d_t(G)\geq3$ where $G$ is a bipartite planar graph of bounded maximum degree. Also, they have shown that if $G$ is split or $k$-regular graph for $k\geq3$, then it is NP-complete to decide whether $d_t(G) \geq k$.

In \cite{chen}, Chen et al., mentioned that  for any graph $G$, it would be interesting to determine any relations between $d_t (G)$ and $d_t (G \cart G)$. More generally, for any graphs $G$ and $H$, we start to determine the relationship between $d_t (G)$, $d_t(H)$ and $d_t (G \cart H)$.
In this direction, we prove that if at least one among $G$ or $H$ is bipartite, then $G\cart H$ has a total domatic coloring with $2\min\{d_t(G),d_t(H)\}$ colors. As consequences, we show that when $n$ is a power of $2$, the total domatic number of the hypercubes $Q_n$ and $Q_{n+1}$ is $n$ and the torus $\mathop{\cart}\limits_{i=1}^n C_{4r_i}$ is $2n$.
Also, for any positive integer $d$ and with some suitable conditions to each $k_i$, we show that the total domatic number and total domination number of the torus $\mathop{\cart}\limits_{i=1}^{d} C_{k_i}$ is  $2d$ and $(\mathop{\prod}\limits_{i=1}^{d} {k_i})/2d$ respectively.
In addition, we obtain similar bounds and results for the domatic number and domination number of $G\cart H$.
Also, we prove that the domatic and total domatic numbers of $G\cart H$ is  upper bounded by  $\max\{|V(G)|, |V(H)|\}$, and lower bounded by   $\max\{d(G),d(H)\}$.

The concept of injective coloring was introduced by Hahn et al., in \cite{hah} and further studied in \cite{bu,cran,luz}. An \emph{injective $k$-coloring} of a graph $G$ is an assignment of $k$ colors to the vertices of $G$ such that any two vertices in the neighborhood of each vertex  have distinct colors.
The minimum $k$ for which such a coloring exists is the \emph{injective chromatic number} of $G$, denoted $\chi_i(G)$.
Also, the injective chromatic number of a graph $G$ can be seen in the following way.
The \emph{common neighbor graph} $G^{(2)}$ of $G$ has the same vertex set  $V(G)$
and any two vertices $u,v$ are adjacent in $G^{(2)}$ if there is a path of length $2$ joining $u$ and $v$ in $G$.
It is noted that $\chi_i (G)=\chi(G^{(2)})$.
The square of a graph $G$, denoted $G^2$, has the same vertex set  $V(G)$
and edge set $E(G)\cup E(G^{(2)})$.

We obtain a lower bound for the domatic and total domatic number of $H_{n-1,q}$ and $H_{n,q}$ respectively, when $n$ is a power of $2$ and $q$ at least $2$. In \cite{chen}, Chen et al., determined the injective chromatic number of $H_{n,q}$, where $q$ is a prime power and $n=\frac{q^k-1}{q-1}$, for some positive integer $k$. As a consequence, we obtain a lower bound for the domatic and total domatic numbers of $H_{n-1,q}$ and $H_{n,q}$ respectively for some more values of $n$ when $q$ is a prime power.

\section{Preliminaries}

It is easy to see that $d(G) \leq \delta(G)+1$ and $d_t(G) \leq \delta(G)$ for every graph $G$.
We will call the graphs which attain these bounds as \emph{domatically full} and \emph{total domatically full} respectively.
Regular total domatically full graphs are also called \emph{rainbow graphs} (see, \cite{oh,wol}). We first make some easy observations on rainbow
graphs. Examples of rainbow graphs include cycles
$C_n$ where $n\equiv 0\pmod 4$, $K_{n,n}$, $K_n \cart K_2$, etc.

\begin{prop}\label{DiviReg}
Let $G$ be an $r$-regular total domatically full graph.
Every $r$-total domatic coloring of $G$, say  $f \colon V(G)\rightarrow [r] $ satisfies the following.
\begin{enumerate}[(i)]
\setlength\itemsep{-2pt}
\item Each color class of $f$ has the same size $\frac{|V(G)|}{r}$. (\cite{wol,zel3})
\item  Each color class of $f$ has an even
 number of vertices and $G$ contains a perfect matching.
\item  $r$ divides $|V(G)|$ and $r^2$ divides $|E(G)|$. (\cite{wol})
\item $\gamma_t(G)=\frac{|V(G)|}{r}$.
\item $d_t(G)=\chi_i(G)=r$.
\end{enumerate}
\end{prop}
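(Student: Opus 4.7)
The plan is to work through the five parts essentially in order, since each later part builds on the earlier ones, with the core structural observation living in parts (i) and (ii).

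For (i), I would double-count edges between two distinct color classes $C_i$ and $C_j$. Since $G$ is $r$-regular and every vertex must see all $r$ colors in its $r$-element open neighborhood, each vertex in $C_i$ has \emph{exactly} one neighbor in $C_j$, and symmetrically each vertex of $C_j$ has exactly one neighbor in $C_i$. Hence the number of edges between $C_i$ and $C_j$ equals both $|C_i|$ and $|C_j|$, giving $|C_i|=|C_j|$ for all pairs, so every class has size $|V(G)|/r$.

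Part (ii) is the structural heart of the proposition, and I expect it to be the least routine step. The key observation is that the pigeonhole argument of (i) applies not only across color classes but within: a vertex $v\in C_i$ has $r$ neighbors realizing all $r$ colors, including its own color $i$, so $v$ has exactly one same-color neighbor. Therefore the subgraph induced on each $C_i$ is $1$-regular, i.e., a perfect matching of $C_i$. This forces $|C_i|$ to be even, and the disjoint union of these $r$ induced matchings (one per color class) is a $1$-regular spanning subgraph of $G$, which is precisely a perfect matching of $G$.

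Part (iii) now falls out by arithmetic: $r\mid |V(G)|$ is immediate from (i), and the handshake lemma gives $|E(G)|=r|V(G)|/2 = r^{2}\cdot\bigl(|V(G)|/(2r)\bigr)$, where $|V(G)|/(2r)$ is an integer because $|V(G)|/r$ is even by (ii). For (iv) the upper bound $\gamma_t(G)\le|V(G)|/r$ is witnessed by any single color class, and the matching lower bound follows from the inequality $|V(G)|\le\sum_{v\in D}|N(v)|=r|D|$ valid for every total dominating set $D$ in an $r$-regular graph. Finally for (v), $d_t(G)=r$ is just the hypothesis that $G$ is total domatically full; for $\chi_i(G)=r$, the crucial remark is that in any total domatic $r$-coloring of an $r$-regular graph the $r$ neighbors of every vertex receive $r$ distinct colors, making the coloring injective and so $\chi_i(G)\le r$, while $\chi_i(G)\ge\Delta(G)=r$ is trivial. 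No step requires anything beyond these pigeonhole/counting observations, so the only place where care is needed is in phrasing (ii) cleanly.
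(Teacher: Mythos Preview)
Your proposal is correct and follows essentially the same argument as the paper: the same ``exactly one neighbor of each other color'' observation for (i), the same ``exactly one same-color neighbor'' observation for (ii), the same arithmetic for (iii), the same total-domination bounds for (iv), and the same remark that an $r$-total domatic coloring of an $r$-regular graph is automatically injective for (v). Your write-up is in fact slightly more explicit than the paper's in places (e.g., phrasing (i) as a double count), but there is no substantive difference in approach.
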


\begin{proof}
(i) Let $V_1$ and $V_2$ be two color classes of $c_1$ and $c_2$ respectively, where $c_1,c_2\in [r]$. Each vertex of $V_1$ is adjacent to exactly one vertex of $V_2$ and vice versa
and thus $|V_1|=|V_2|$. Hence $f$ partitioned $V(G)$ into $r$ classes having the same size  $\frac{|V(G)|}{r}$ (see, \cite{wol,zel3}).\\
(ii) For any vertex $x\in V(G)$, there exists exactly one neighbor of $x$ having the color $f(x)$. Thus each color class $V_i$ of $f$ induces a perfect matching in $\langle V_i\rangle$ and hence $|V_i|$ is even.  Also, $G$ contains a perfect matching which is the union of perfect matchings of all color classes.\\
(iii) Clearly, $r$ divides $|V(G)|$ and $r^2$ divides $|E(G)|$ which follows from the fact that $|E(G)|=\frac{|V(G)| r}{2}$ and $\frac{|V(G)|}{r}$ is even (see, \cite{wol}). \\
(iv) Each color class $V_i$ of $f$ is a total dominating set, thus $\gamma_t(G)\leq\frac{|V(G)|}{r}$. Also, $\gamma_t(G)\geq\frac{|V(G)|}{\Delta(G)}=\frac{|V(G)|}{r}$, since any set $S$ of size smaller than $\frac{|V(G)|}{\Delta(G)}$ can dominate at most $|S|\Delta(G) <  |V(G)|$ vertices.\\
(v) Clearly, $f$ is also an injective coloring and the proof follows from $\chi_i(G) \geq \Delta(G) = r$.
\end{proof}

A complete characterization of regular domatically full
graphs was done by Zelinka \cite{zel1}. Examples of regular domatically full graphs include cycles
$C_n$ where $n\equiv 0\pmod 3$, $K_n$, etc.

\begin{thm}\cite{zel1}\label{DiviRegDom}
An $r$-regular graph $G$ is domatically full if and only if $r+1$ divides $|V(G)|$ and $G$ has an $(r+1)$-coloring $f $ such that  each color class of $f$ is an independent set of size $\frac{|V(G)|}{r+1}$ and the subgraph induced by the vertices of any two color classes of $f$ is a perfect matching.
\end{thm}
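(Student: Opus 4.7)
The plan is to exploit the fact that when $G$ is $r$-regular, the closed neighborhood $|N[v]| = r+1$ exactly equals the number of colors in an $(r+1)$-coloring, which forces a rigid ``each color appears exactly once in $N[v]$'' structure. This is the single observation that drives both directions.

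For the forward direction, suppose $G$ is $r$-regular and domatically full, so $d(G) = r+1$ and there is a partition $V_1, \ldots, V_{r+1}$ of $V(G)$ into dominating sets. Fix $v \in V_i$. Since $N[v]$ must meet every color class and $|N[v]| = r+1 = $ number of colors, each color appears in $N[v]$ exactly once. Applying this to color $i$ shows that no neighbor of $v$ lies in $V_i$, so $V_i$ is independent. Applying it to any $j \neq i$ shows $v$ has exactly one neighbor in $V_j$; symmetrically every vertex of $V_j$ has exactly one neighbor in $V_i$, so the bipartite subgraph between $V_i$ and $V_j$ is a perfect matching. Since $V_i$ and $V_j$ are independent, $\langle V_i \cup V_j\rangle$ is precisely this perfect matching, which in particular forces $|V_i| = |V_j|$. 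Therefore all classes have the common size $|V(G)|/(r+1)$, and $r+1$ divides $|V(G)|$.

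For the reverse direction, suppose $f$ is an $(r+1)$-coloring with the stated matching property. Take any $v \in V_i$ and any $j \neq i$. Since $\langle V_i \cup V_j\rangle$ is a perfect matching and each vertex of $V_i$ has degree $1$ there, $v$ has exactly one neighbor in $V_j$; thus $V_j$ meets $N(v) \subseteq N[v]$. Combined with $v \in V_i$, this shows $N[v]$ contains a vertex of every color class, so each $V_j$ is dominating. Hence $d(G) \geq r+1$; combined with the general bound $d(G) \leq \delta(G)+1 = r+1$ mentioned in the Preliminaries, $G$ is domatically full.

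There is no real obstacle: the entire argument is driven by the counting identity $|N[v]| = r+1$, which collapses the ``dominating'' condition into ``each color appears exactly once in each closed neighborhood.'' The only thing to be careful about is distinguishing the vertex $v$ itself (providing color $i$) from its neighbors (providing the other $r$ colors), so that the matching property is read off from the ``exactly one neighbor of each other color'' conclusion rather than from $N[v]$ directly.
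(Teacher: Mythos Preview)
Your proof is correct. Note, however, that the paper does not supply its own proof of this theorem: it is stated with a citation to Zelinka~\cite{zel1} and used as a black box. Your argument is the natural one (and essentially the one Zelinka gives): the pigeonhole identity $|N[v]| = r+1$ forces each color to appear exactly once in every closed neighborhood, from which independence of each class, the perfect-matching structure between any two classes, and hence the equal class sizes all follow immediately; the converse is a direct verification. There is nothing to compare against in the present paper.
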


The domatic coloring of an $r$-regular graph is closely associated with the proper injective coloring which follows in Corollary \ref{domcor}.
We say that an injective coloring is proper if no two adjacent vertices get the same color.
\begin{cor}\label{domcor}
Let $G$ be an $r$-regular graph. $G$ is domatically full if and only if $G$ has a proper $(r+1)$-injective coloring. Also, $\gamma(G)=\frac{|V(G)|}{r+1}$.
\end{cor}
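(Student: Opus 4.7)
The proof plan hinges on a single counting observation: since $G$ is $r$-regular, each closed neighborhood $N[v]$ has exactly $r+1$ vertices, so for any map $f\colon V(G)\to[r+1]$, pigeonhole makes the statements ``all $r+1$ colors appear in $N[v]$'' and ``the $r+1$ vertices of $N[v]$ receive pairwise distinct colors'' equivalent for each fixed $v$. Asserting the latter at every $v\in V(G)$ is precisely the conjunction of properness (no edge is monochromatic, i.e., $f(v)\neq f(u)$ whenever $u\in N(v)$) and injectivity (no two vertices of $N(v)$ share a color). I would record this equivalence first, after which the iff of the corollary is immediate: in an $r$-regular graph, a domatic $(r+1)$-coloring is the same object as a proper injective $(r+1)$-coloring, so $G$ admits one iff it admits the other, i.e., iff $G$ is domatically full.

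For the identity $\gamma(G)=|V(G)|/(r+1)$ (assumed under the hypothesis that $G$ is domatically full, so that the numerator divides evenly), I would invoke Theorem~\ref{DiviRegDom} to produce a partition of $V(G)$ into $r+1$ dominating classes of common size $|V(G)|/(r+1)$; any single class then witnesses $\gamma(G)\le |V(G)|/(r+1)$. The reverse inequality is the standard packing bound: for any dominating set $D$, its closed neighborhood covers at most $(1+\Delta(G))|D|=(r+1)|D|$ vertices, and this must be at least $|V(G)|$, yielding $|D|\ge |V(G)|/(r+1)$.

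I do not anticipate a genuine obstacle; the argument is essentially an unpacking of definitions together with one appeal to Theorem~\ref{DiviRegDom}. The only delicate point is terminological: an ``$(r+1)$-injective coloring'' must be interpreted as one using at most $r+1$ colors, so that the pigeonhole step inside each $N[v]$ is valid; once this is fixed the whole corollary is a one-line consequence of the pigeonhole reformulation and a one-line double inequality for~$\gamma$.
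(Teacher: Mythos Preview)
Your proposal is correct. The paper states this corollary without an explicit proof, intending it to be read off from Zelinka's characterization (Theorem~\ref{DiviRegDom}): the conditions ``each color class independent'' and ``any two classes induce a perfect matching'' translate directly into ``proper'' and ``injective on each $N(v)$'', and the equal class sizes give the upper bound on~$\gamma$. Your route for the biconditional is genuinely different and more elementary: you exploit only that $|N[v]|=r+1$ and apply pigeonhole inside each closed neighborhood, so that surjectivity of the color map on $N[v]$ is the same as injectivity there, which unpacks exactly to properness plus injective-coloring. This bypasses Zelinka's theorem entirely for the ``if and only if'' and is the cleaner argument. For the domination identity you do fall back on Theorem~\ref{DiviRegDom} to get equal-sized classes, matching the paper's intended route; note, though, that even this is not strictly necessary---any domatic $(r+1)$-partition already has a smallest class of size at most $|V(G)|/(r+1)$, which together with your packing lower bound yields the equality (and integrality) without invoking Zelinka.
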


\section{Domatic and total domatic numbers of Cartesian products}

The union of two disjoint dominating sets of $G$ should be a total dominating set for $G$ and thus $\left\lfloor\frac{d(G)}{2}\right\rfloor\leq d_t(G)\leq d(G)\leq {2d_t(G)+1}$. In any total domatic coloring of $G$, each vertex should receive its own color from a neighbor, thus $d_t(G)\leq \left\lfloor\frac{|V(G)}{2}\right\rfloor$ and also, $d(G)\leq |V(G)|$.
Hence it follows that, for any two graphs $G$ and $H$, $\left\lfloor\frac{1}{2}\max\{d(G),d(H)\}\right\rfloor\leq \left\lfloor\frac{1}{2}d(G\cart H)\right\rfloor\leq d_t(G\cart H)\leq \left\lfloor\frac{1}{2}|V(G\cart H)|\right\rfloor=  \left\lfloor\frac{1}{2}|V(G)||V(H)|\right\rfloor$ and $d(G\cart H)\leq |V(G\cart H)|=|V(G)||V(H)|$.
We improve these bounds given above for any two graphs  in Theorem \ref{cccartub}.

\begin{thm}\label{cccartub}
For any two graphs $G$ and $H$ without an isolated vertex, we have
\begin{center}
$\max\{d(G),d(H)\}\leq d_t(G\cart H)\leq d(G\cart H)\leq \max\{|V(G)|,|V(H)|\}$.
\end{center}
\end{thm}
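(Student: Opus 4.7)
The plan is to treat the three inequalities independently. The middle inequality $d_t(G\cart H)\leq d(G\cart H)$ is immediate, since every total dominating set is a dominating set, so any total domatic coloring is also a domatic coloring.

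For the lower bound $\max\{d(G),d(H)\}\leq d_t(G\cart H)$, I would use a direct lifting construction. Assuming without loss of generality that $d(G)\geq d(H)$, let $k=d(G)$ and fix a domatic $k$-coloring $f\colon V(G)\to[k]$ of $G$. Define $F\colon V(G\cart H)\to[k]$ by $F(x,y)=f(x)$ and verify that $F$ is total domatic. Given a vertex $(x,y)$ and a color $c$: if $c=f(x)$, then since $H$ has no isolated vertex, $y$ has some neighbor $y'$ in $H$ and $(x,y')\in N((x,y))$ satisfies $F(x,y')=c$; if $c\neq f(x)$, the domatic property of $f$ on $G$ supplies an $x'\in N_G(x)$ with $f(x')=c$, and $(x',y)\in N((x,y))$ has color $c$.

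The main work is the upper bound $d(G\cart H)\leq\max\{|V(G)|,|V(H)|\}$. Set $n=|V(G)|$ and $m=|V(H)|$ with $n\geq m$, and suppose toward a contradiction that $d(G\cart H)\geq n+1$. Fix a partition $V(G\cart H)=D_1\cup\cdots\cup D_k$ with $k\geq n+1$ where each $D_i$ is dominating. Averaging over classes, a smallest class $D_c$ has
\[
|D_c|\leq\left\lfloor\frac{nm}{k}\right\rfloor\leq\left\lfloor\frac{nm}{n+1}\right\rfloor=\left\lfloor m-\frac{m}{n+1}\right\rfloor\leq m-1.
\]
Since $|D_c|\leq m-1<m\leq n$, the class $D_c$ meets at most $m-1$ of the $n$ distinct $H$-layers $\{H_u:u\in V(G)\}$, so there exists $u\in V(G)$ with $D_c\cap H_u=\emptyset$. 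Every vertex $(u,y)\in H_u$ must still be dominated by $D_c$; its neighbors in $H_u$ are $\{u\}\times N_H(y)\subseteq H_u$ and therefore miss $D_c$, forcing the dominating neighbor to lie in $N_G(u)\times\{y\}$. Hence for each $y\in V(H)$ some $u'_y\in N_G(u)$ satisfies $(u'_y,y)\in D_c$. These $m$ vertices have pairwise distinct second coordinates and all lie in $D_c$, so $|D_c|\geq m$, contradicting $|D_c|\leq m-1$.

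The main obstacle is locating the right combinatorial leverage for the upper bound: the generic estimate $d(G\cart H)\leq\delta(G\cart H)+1\leq n+m-1$ is far too weak. The saving move is to combine the averaging bound on the smallest color class with the observation that a class of size less than $n$ must miss an entire $H$-layer, which in turn forces the class to have size at least $m$.
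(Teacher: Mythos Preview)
Your proof is correct and takes essentially the same approach as the paper. The paper phrases the upper bound via a ``cross-hair'' argument (a color appearing fewer than $m$ times must miss both a row and a column of the $m\times n$ grid, leaving the cell at their intersection undominated), which is exactly the contrapositive of your observation that a dominating class missing an entire $H$-layer must meet every $G$-layer; the lower-bound construction is identical to yours.
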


\begin{proof}
Let $G$ and $H$ be graphs of order $m$ and $n$ respectively. Now, let us prove the upper bound for $d(G\cart H)$. Without loss of generality, let $n\geq m$. Let us consider the coloring of $G\cart H$ as filling the cells of $m\times n$ grid with colors. For a cell $(i,j)$, $1\leq i\leq m$, $1\leq j\leq n$, call the set of cells in the $i^{th}$ row and $j^{th}$ column as a cross-hair at $(i,j)$.
There are $mn$ cross-hairs, one corresponding to each cell of the grid. Each cross-hair has $m+n-1$ cells. If there is a $k$-domatic coloring, then each cross-hair contains all $k$ colors occurs at least once.

\noindent
\emph{Claim.} In any domatic coloring of $G\cart H$, each color should appears in at least $m$ cells.

Suppose a color $c_1$ appears in less than $m$ cells, then there exists a row $i$ as well as a column $j$ in the grid which do not contain $c_1$. In this case, the cross-hair at $(i,j)$ does not contain $c_1$ and hence the coloring is not a domatic coloring. Thus the claim holds.
\noindent
Since each color should appears in at least $m$ cells and there are $mn$ cells in the grid, the maximum possible value of $k$ in any domatic $k$-coloring is $n$. Thus $d(G\cart H)\leq n=\max\{|V(G)|, |V(H)|\}$.

Now, let us prove the lower bound for $d_t(G\cart H)$. Let  $r$ and $s$ be the domatic numbers of   $G$ and $H$ respectively. Suppose $r\geq s$. Let $D_1,D_2,\ldots, D_r$ be a domatic partition of $V(G)$. For $1\leq i\leq r$, any vertex $u\in D_i$ and $v\in V(H)$, let us define a coloring $f$ for the vertices of $G\cart H$ by  $f((u,v))=i$. Since $H$ does not have an isolated vertex, $f$ is a total domatic coloring of $G\cart H$ with $r$ colors and thus $d_t(G\cart H)\geq r=\max\{d(G),d(H)\}$.
\end{proof}

\noindent If at least one of these graphs $G$ and $H$ is disconnected, then the upper bound can be improved by considering the smallest size of the components of $G$ and $H$.

The bounds given in Theorem \ref{cccartub} are tight for the graphs mentioned in Corollary \ref{cccartGKn}.

\begin{cor}\label{cccartGKn}
Let $ m,n$ be two integers greater than $1$ and $G$  be a graph of order $m$ without an isolated vertex. If $m\leq n$, then $d_t(G\cart K_n)=d(G\cart K_n)=n$. In particular, $d_t(K_m\cart K_n)=d(K_m\cart K_n)=\max\{m,n\}$.
\end{cor}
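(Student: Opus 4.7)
The upper bound side is immediate from Theorem \ref{cccartub}: since $m \leq n$, we have $d(G \cart K_n) \leq \max\{m,n\} = n$, and the universal inequality $d_t \leq d$ gives $d_t(G \cart K_n) \leq n$ as well. So my entire plan for the lower bound reduces to exhibiting an explicit total domatic $n$-coloring of $G \cart K_n$.

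The coloring I would use is the simplest one imaginable: make every $G$-layer monochromatic. Writing $V(K_n) = \{v_1, \ldots, v_n\}$, I define $f \colon V(G \cart K_n) \to [n]$ by $f((u, v_j)) = j$. To check that $f$ is total domatic, fix any vertex $(u, v_j)$. Its neighbors inside the $K_n$-layer $(K_n)_u$ form a clique of $n-1$ vertices carrying precisely the colors $[n] \setminus \{j\}$, since each $K_n$-layer under $f$ is a rainbow. This supplies $n-1$ of the $n$ required colors; only the vertex's own color $j$ is missing and must come from the $G$-layer $G_{v_j}$. But $G_{v_j}$ is entirely colored $j$, and the hypothesis that $G$ has no isolated vertex guarantees $u$ has some neighbor $u'$ in $G$, so $(u', v_j)$ is a neighbor of $(u, v_j)$ already colored $j$. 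Thus the open neighborhood of every vertex of $G \cart K_n$ contains all $n$ colors, giving $d_t(G \cart K_n) \geq n$.

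Combining with the upper bound yields $d_t(G \cart K_n) = d(G \cart K_n) = n$. The in-particular statement about $K_m \cart K_n$ follows by applying the general statement with $G$ equal to whichever of $K_m, K_n$ is smaller, using commutativity of $\cart$.

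There is no genuine obstacle here: the coloring is essentially forced once one observes that the $K_n$-layer can supply only $n-1$ colors to any vertex's open neighborhood, so the vertex's own color must be retrieved from the $G$-layer. The two hypotheses play cleanly separate roles — "$m \leq n$" is used only in the upper bound via Theorem \ref{cccartub}, while "$G$ has no isolated vertex" is used only to make the monochromatic-column trick succeed in the lower-bound construction.
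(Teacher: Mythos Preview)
Your proof is correct. The paper's proof is even more compressed: it applies \emph{both} inequalities of Theorem~\ref{cccartub} in a single line, observing that $d(K_n)=n$ so that $n=\max\{d(G),d(K_n)\}\leq d_t(G\cart K_n)\leq d(G\cart K_n)\leq \max\{m,n\}=n$. Your explicit monochromatic-layer coloring is precisely what the proof of the lower bound in Theorem~\ref{cccartub} produces when specialized to $H=K_n$ (the domatic partition of $K_n$ being the $n$ singletons), so you have simply unrolled that argument rather than citing it.
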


\begin{proof}
By Theorem \ref{cccartub}, we have $n=\max\{d(G),d(K_n)\}\leq d_t(G\cart K_n)\leq d(G\cart K_n)\leq \max\{m,n\}=n$.
\end{proof}

The tightness of the lower bound for $d_t(G\cart H)$ in Theorem \ref{cccartub} can be also seen by considering $G \cong K_n$ and $H \cong K_2$. More generally, in all cases when $G$ is domatically full and $\delta(H) = 1$, the lower bound is attained for $d_t(G\cart H)$.

\begin{cor}\label{cccart1cycle}
If $G$ is a domatically full graph and $H$ is a graph with minimum degree $1$, then $d_t(G \cart H)=d(G)$.
\end{cor}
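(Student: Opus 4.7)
The plan is to split into lower and upper bounds and verify that both agree on $d(G)$.

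For the lower bound $d_t(G \cart H) \geq d(G)$, I would simply invoke Theorem \ref{cccartub}, which gives $d_t(G \cart H) \geq \max\{d(G), d(H)\} \geq d(G)$. This is free — no construction needed beyond the one already built in Theorem \ref{cccartub} (lift a domatic partition of $G$ to $G \cart H$ via the projection onto the first coordinate; since every vertex of $H$ has a neighbor, each layer $G_v$ then becomes a total dominating set colorwise).

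For the upper bound $d_t(G \cart H) \leq d(G)$, I would use the two standard observations recalled at the start of Section 2: namely $d_t(G') \leq \delta(G')$ for any graph $G'$, and the fact (immediate from the definition of the Cartesian product) that $\delta(G \cart H) = \delta(G) + \delta(H)$, since each vertex $(u,v)$ has degree $\deg_G(u) + \deg_H(v)$. Plugging $\delta(H) = 1$ in, we obtain
\[
d_t(G \cart H) \;\leq\; \delta(G \cart H) \;=\; \delta(G) + 1.
\]
Because $G$ is domatically full, $d(G) = \delta(G) + 1$, so the right-hand side is exactly $d(G)$, and the two bounds match.

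There is no real obstacle here; the statement is essentially packaging the three ingredients (the general upper bound $d_t \leq \delta$, the additivity of minimum degree under $\cart$, and the definition of ``domatically full'') together with the lower bound from Theorem \ref{cccartub}. The only place to be a little careful is making sure the $H$ factor actually contributes $+1$ and no more to $\delta(G \cart H)$, which is exactly the hypothesis $\delta(H) = 1$.
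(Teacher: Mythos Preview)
Your proof is correct and follows exactly the same route as the paper: the lower bound via Theorem~\ref{cccartub}, and the upper bound via $d_t(G\cart H)\leq\delta(G\cart H)=\delta(G)+1=d(G)$.
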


\begin{proof}
We have $d_t(G \cart H) \geq d(G)$ by Theorem \ref{cccartub}. The upper bound follows since $d_t(G \cart H) \leq \delta(G \cart H) = \delta(G) + 1 = d(G)$.
\end{proof}

The tightness of the upper bound in Theorem \ref{cccartub} can also be seen by considering $G \cong K_n$ and $H \cong K_2$. Theorem \ref{cccartGH} demonstrates the same in a more general case.

\begin{thm}\label{cccartGH}
Let $r, s_0,s_1,\ldots,s_{r-1}$ be positive integers such that $r\geq2$, $s_0\leq s_1\leq \cdots \leq s_{r-1}$. If $G$ is a graph with total domatic number at least $s_0$ and $H$ is a graph which contains $K_{s_0,s_1,\ldots,s_{r-1}}$ as a spanning subgraph, then $d_t(G\cart H)\geq rs_0$. If each $s_i$ is equal to $s_0$ and $|V(G)|\leq rs_0$,
then $d_t(G\cart H)=|V(H)|=rs_0$ and $d_t(H\cart H)=|V(H)|$. If $G$ is a graph with domatic number at least $s_0$, then the same results hold for $d(G\cart H)$.
\end{thm}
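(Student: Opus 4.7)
The plan is to construct an explicit total domatic coloring of $G\cart H$ whose color set I take to be $[r]\times\mathbb{Z}_{s_0}$. Write $V_0,V_1,\ldots,V_{r-1}$ for the parts of the spanning $K_{s_0,s_1,\ldots,s_{r-1}}$ inside $H$, so $|V_i|=s_i\geq s_0$ and every vertex of $V_i$ is $H$-adjacent to every vertex of $V_j$ whenever $j\neq i$. Fix a total domatic $s_0$-coloring $f\colon V(G)\to\mathbb{Z}_{s_0}$ of $G$ (available because $d_t(G)\geq s_0$) and, for each $i$, a surjection $\phi_i\colon V_i\to\mathbb{Z}_{s_0}$ (available because $|V_i|\geq s_0$). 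I propose the coloring
\[
c(u,v)\;=\;\bigl(i,\; f(u)+\phi_i(v)\bigr)\qquad\text{whenever } v\in V_i,
\]
with addition performed in $\mathbb{Z}_{s_0}$.

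To verify $c$ is total domatic, fix a vertex $(u,v)$ with $v\in V_i$ and a target color $(j,k)\in[r]\times\mathbb{Z}_{s_0}$. If $j\neq i$, the entire slab $\{u\}\times V_j$ lies in $N_{G\cart H}((u,v))$ by the spanning multipartite property, and as $v'$ runs over $V_j$ the values $\phi_j(v')$ hit every element of $\mathbb{Z}_{s_0}$ by surjectivity; so some $v'\in V_j$ satisfies $\phi_j(v')=k-f(u)$ and the neighbor $(u,v')$ has color $(j,k)$. If $j=i$, I use $G$-layer neighbors of $(u,v)$: since $f$ is total domatic, $\{f(u')\colon u'\in N_G(u)\}=\mathbb{Z}_{s_0}$, so some $u'\in N_G(u)$ satisfies $f(u')=k-\phi_i(v)$, giving a neighbor $(u',v)$ of color $(i,k)$. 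Hence $d_t(G\cart H)\geq rs_0$.

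For the equality clauses assume each $s_i=s_0$. Then $|V(H)|=rs_0\geq|V(G)|$, so Theorem~\ref{cccartub} gives $d(G\cart H)\leq rs_0$, and combined with the lower bound this pins down $d_t(G\cart H)=d(G\cart H)=rs_0=|V(H)|$. To deduce $d_t(H\cart H)=|V(H)|$ I apply the same construction with $G:=H$, which requires $d_t(H)\geq s_0$; this holds because the coloring that bijects each $V_i$ with $\mathbb{Z}_{s_0}$ is total domatic on the multipartite spanning subgraph (using $r\geq 2$, any vertex already sees every element of $\mathbb{Z}_{s_0}$ inside any other part) and hence on $H$. The domatic version of the whole argument runs identically after replacing $f$ by a domatic $s_0$-coloring of $G$; the $j=i$ case now draws colors from $N_G[u]$ rather than $N_G(u)$, and the auxiliary coloring of $H$ above is simultaneously domatic.

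The main obstacle is discovering the form of $c$: every one of the $rs_0$ colors must be visible in every single open (resp.\ closed) neighborhood. The additive shift $f(u)+\phi_i(v)$ is the crucial device, allowing the $i$-th color block to be swept out within any $H$-layer by varying $\phi_i$ and within any $G$-layer by varying $f$. Once this device is in place, the upper bound and the $H\cart H$ specialization fall straight out of Theorem~\ref{cccartub}.
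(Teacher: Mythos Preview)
Your proof is correct and is essentially the same construction as the paper's. The paper uses the single color set $\mathbb{Z}_{rs_0}$ with the formula $f((u_{ij},v_{kl}))=(r(i+l)+k)\bmod rs_0$, which is just your pair $(k,\,i+l)\in \mathbb{Z}_r\times\mathbb{Z}_{s_0}$ pushed through the bijection $(a,b)\mapsto rb+a$; your surjections $\phi_i$ correspond to the paper's implicit reduction of the index $l\in\mathbb{Z}_{s_k}$ modulo $s_0$, and your verification via the two cases $j\neq i$ and $j=i$ mirrors the paper's check that the $H$-layer contributes all residues except those $\equiv k\pmod r$ while the $G$-layer fills in the rest.
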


\begin{proof}
Let $G$ be a graph with total domatic number at least $s_0$ and $H$ be a graph which contains the spanning subgraph $H'$, namely $K_{s_0,s_1,\ldots,s_{r-1}}$.
Let $U_i$, $i\in \mathbb{Z}_{s_0}$ be the color classes corresponding to $s_0$-total domatic coloring of $G$.
We label the vertices in each color class $U_i$ of $G$ as $\{u_{ij}:  j\in\mathbb{Z}_{|U_i|}\}$
 and label the vertices of $k^{th}$ part of $H'$, $k\in\mathbb{Z}_{r}$ as $\{v_{kl}:  l\in\mathbb{Z}_{s_k}\}$.
Let us define a coloring $f$ for the vertices of $G\cart H'$ in the following way:
\begin{center}
$f((u_{ij},v_{kl}))=(r(i+l)+k)\pmod{rs_0}.$
\end{center}
The vertex $(u_{ij},v_{kl})$ should be adjacent to the vertices $\{(u_{i' j'},v_{kl}): i'\in\mathbb{Z}_{s_0},\mathrm{~for~ some~} j'\in\mathbb{Z}_{|U_{i'}|}\} $ in the layer $G_{v_{kl}}$ and $\{(u_{ij},v_{k'l'}): k'\in\mathbb{Z}_ r, k'\neq k,  l' \in\mathbb{Z}_{s_{k'}}\}$ in the layer $H_{u_{ij}}$.
Note that $\{(r(i+l')+k')\pmod{rs_0}:k' \in\mathbb{Z}_ r,  l' \in\mathbb{Z}_{s_{k'}}\} =\mathbb{Z}_{rs_0}$.
The set of colors in the neighbors of $(u_{ij},v_{kl})$ are  $\{\{(r(i'+l)+k)\pmod{rs_0}: i'\in\mathbb{Z}_{s_0}\} \cup \mathbb{Z}_{rs_0}\setminus\{(r(i+l')+k)\pmod{rs_0}:  l'\in\mathbb{Z}_{s_{k}}\} \}=\mathbb{Z}_{rs_0}$ as $s_k\geq s_0$.
%
Thus, each vertex of $G\cart H'$ sees all the colors $\mathbb{Z}_{rs_0}$ in its open neighborhood. Since $G\cart H'$ is a spanning subgraph of $G\cart H$, we have $d_t(G\cart H)\geq d_t(G\cart H')\geq rs_0$.

For $r\geq2$, $ i\in\mathbb{Z}_{r}$, if each $s_i$ is equal to $s_0$, then the coloring $f$ mentioned above yields that $d_t(G\cart H)\geq d_t(G\cart H')\geq rs_0=|V(H)|$ and by Theorem \ref{cccartub}, we have $d_t(G\cart H)\leq \max\{|V(G)|,|V(H)|\}=|V(H)|$. Since $d_t(H)\geq s_0$, by above arguments we get $d_t(H\cart H)=|V(H)|$.

If $G$ is a graph with domatic number at least $s_0$, then the same proof mentioned above will work for $d(G\cart H)$ in such a way that
each vertex of $G\cart H'$ should contain the vertices of all the colors $\mathbb{Z}_{rs_0}$ in its closed neighborhood. Thus  $d(G\cart H)\geq d(G\cart H')\geq rs_0$. Also, $d(G\cart H)=|V(H)|=rs_0$ when each $s_i$ is equal to $s_0$ and $|V(G)|\leq rs_0$.
\end{proof}

\begin{rem}
A graph $H$ contains $K_{s_0,s_1, \ldots, s_{r-1}}$, $s_0 \leq s_1\leq \cdots \leq s_{r-1}$, as a spanning graph if and only if the components of $\overline{H}$ can be grouped into $r$ parts such that each part has at least $s_0$ vertices.
\end{rem}

Next, let us consider the Cartesian product of complete graphs and cycles. The upper bound given in Theorem \ref{cccartub} is not tight for cycles of length larger than the size of the complete graphs.

\begin{prop}
Let $ m,n$ be two integers such that $m>n\geq3$, we have $d_t(C_m\cart K_n)=n$.
\end{prop}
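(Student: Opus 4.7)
The plan is to take the lower bound as an immediate application of Theorem~\ref{cccartub} (since $\max\{d(C_m),d(K_n)\}\geq d(K_n)=n$), and devote the real work to the upper bound: the trivial $\delta$-bound only gives $d_t(C_m\cart K_n)\leq \delta(C_m\cart K_n)=n+1$, so I must rule out $d_t=n+1$. I would argue by contradiction. Suppose $c$ is a total domatic $(n+1)$-coloring of $C_m\cart K_n$. Since the graph is $(n+1)$-regular and $c$ uses exactly $n+1$ colors, every open neighbourhood, which has size $n+1$, must contain each color exactly once.

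The key structural step is to extract a ``column constraint.'' Each $K_n$-layer (``row'') at position $i$ is a clique on $n$ vertices and hence must receive $n$ pairwise distinct colors, leaving exactly one color $x_i$ missing from row $i$. For any vertex $(i,v_j)$, its $n-1$ layer-mates contribute the $n-1$ colors of row $i$ other than $c(i,v_j)$, so its two cycle neighbours $(i\pm 1,v_j)$ must supply the two remaining colors, namely $x_i$ and $c(i,v_j)$ itself. This yields the identity
\[
\{c(i-1,v_j),\, c(i+1,v_j)\}=\{x_i,\, c(i,v_j)\},
\]
an equality of two-element sets since $x_i$ is missing from row $i$ while $c(i,v_j)$ is present.

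The final step is a simple double count at row $0$: for each of the $n$ columns $v_j$, exactly one of $c(1,v_j)$, $c(m-1,v_j)$ equals the missing color $x_0$. Hence $x_0$ occurs at least $n$ times across the $2n$ positions in rows $1$ and $m-1$. But each row is rainbow in its $K_n$-layer, so $x_0$ appears in row $1$ at most once and in row $m-1$ at most once, for a total of at most $2$ occurrences. This forces $n\leq 2$, contradicting $n\geq 3$.

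The only real obstacle is spotting the column constraint; once it is in hand, the counting contradiction is immediate. Note that the hypothesis $m>n$ is used only to demarcate the new case not already covered by Corollary~\ref{cccartGKn}: the argument above is valid whenever $m\geq 3$ and $n\geq 3$, since $C_m$ guarantees that rows $1$ and $m-1$ are distinct.
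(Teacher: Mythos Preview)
Your proof is correct and takes a genuinely different route from the paper's. Both arguments begin identically: the lower bound is Theorem~\ref{cccartub}, and for the upper bound one assumes a total domatic $(n+1)$-coloring of the $(n+1)$-regular graph $C_m\cart K_n$, so that every open neighbourhood is rainbow. From there the two proofs diverge. The paper appeals to Proposition~\ref{DiviReg}: each color class has size $mn/(n+1)$ and induces a perfect matching, whose edges are forced to lie in the cycle direction; a packing analysis along the cycle then bounds the number of such edges by $\lfloor (m-1)/3\rfloor$, and comparing $mn/(n+1)\leq 2\lfloor(m-1)/3\rfloor\leq 2m/3$ yields $n\leq 2$. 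Your argument never touches Proposition~\ref{DiviReg}: you observe that each $K_n$-layer is rainbow with a unique missing color $x_i$, derive the local identity $\{c(i-1,v_j),c(i+1,v_j)\}=\{x_i,c(i,v_j)\}$, and finish with a two-line double count of the occurrences of $x_0$ in rows $1$ and $m-1$. This is shorter, avoids the somewhat delicate edge-packing step, and needs neither the equal-size nor the perfect-matching property of color classes. Your closing remark is also apt: the argument goes through for every $m\geq 3$ and $n\geq 3$, and the hypothesis $m>n$ merely separates the statement from the range already handled by Corollary~\ref{cccartGKn}.
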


\begin{proof}
Let $U=\{u_i:i\in\mathbb{Z}_{m}\}$ and $V=\{v_j:j\in\mathbb{Z}_{n}\}$ be the vertices of $C_m$ and $K_n$ respectively. Suppose there exists an $(n+1)$-total domatic coloring for $C_m\cart K_n$. Since $C_m\cart K_n$ is $(n+1)$-regular, the neighbors of each vertex should be colored distinctly and any edge of the same color class cannot be in the layer $\{u_i\}\cart K_n$ for all $i$, $0\leq i\leq m-1$. Let us consider an edge $(u_i,v_0)(u_{i+1},v_0)$ in $C_m\cart \{v_0\}$ such that both the vertices are colored $c_1$.
For all $x\in \{u_{i-1},u_i,u_{i+1},u_{i+2}\}$, the vertices of $\{x\}\cart K_n$, $(u_{i-2},v_0)$ and $(u_{i+3},v_0)$ cannot be colored with color $c_1$, otherwise there exists a vertex which sees the same color $c_1$ in its two neighbors.
If we choose the another edge  for the color class $c_1$ either $(u_{i+3},y)(u_{i+4},y)$ or $(u_{i-2},y)(u_{i-3},y)$ for some $y\in\{v_1,\ldots,v_{n-1}\}$, then it covers the vertices of exactly $3$ new layers $\{z\}\cart K_n$ for all $z\in \{u_{i+3},u_{i+4},u_{i+5}\}$ or $z\in \{u_{i-2},u_{i-3},u_{i-4}\}$ respectively.
Otherwise, an edge will covers $4$ new layers of $K_n$ in $C_m\cart K_n$. Thus an edge which chosen first has covered $4$ layers of $K_n$ and the subsequent edges covers either $3$ or $4$ layers of $K_n$, likewise we can choose at most $\left\lfloor \frac{m-1}{3}\right\rfloor$ edges for a color class. By (i) of Proposition \ref{DiviReg}, size of an each color class equals $\frac{mn}{n+1}\leq 2\left\lfloor \frac{m-1}{3}\right\rfloor\leq \frac{2m}{3}$
 which in turns $n\leq\frac{2(n+1)}{3}$ and yields $n\leq2$, a contradiction. Hence $d_t(C_m\cart K_n)\leq n$ and lower bound follows from $d_t(C_m\cart K_n)\geq\max\{d(C_m),d(K_n)\}= n$.
\end{proof}

Now, let us consider the bounds for the domatic and total domatic numbers of $G\cart K_2$.

\begin{prop}\label{propnew}
Let $G$ be a graph without an isolated vertex, we have $d(G)\leq d_t(G\cart K_2)\leq2d_t(G)+1\leq 2d(G)+1$ and $d(G\cart K_2)\leq 2d(G)+1$.
\end{prop}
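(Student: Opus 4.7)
The statement is a chain of inequalities, two of which are essentially immediate. The bound $d(G) \le d_t(G \cart K_2)$ is the lower bound of Theorem \ref{cccartub} applied to $H = K_2$ (noting $d(K_2) = 2$). The bound $2d_t(G)+1 \le 2d(G)+1$ reduces to the known inequality $d_t(G) \le d(G)$ recorded at the start of Section~3. The content is therefore in the two upper bounds $d_t(G \cart K_2) \le 2d_t(G)+1$ and $d(G \cart K_2) \le 2d(G)+1$, which admit the same proof, so I will sketch the total-domatic version and indicate the small change for the domatic version.

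Given a total domatic $k$-coloring $f$ of $G \cart K_2$, the aim is to construct a total domatic $m$-coloring of $G$ with $m = \lfloor k/2 \rfloor$, which yields $k \le 2m+1 \le 2d_t(G)+1$. Let $\phi_0(u) = f((u,0))$ and $\phi_1(u) = f((u,1))$. Because the open neighborhood of $(u,0)$ in $G \cart K_2$ equals $\{(u,1)\} \cup \{(v,0) : v \in N_G(u)\}$, the total-domatic property forces
\[
\phi_0(N_G(u)) \supseteq [k] \setminus \{\phi_1(u)\} \qquad \text{for every } u \in V(G).
\]

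Now fix any surjection $h : [k] \to [m]$ each of whose fibers has size at least two (for instance $h(2i-1)=h(2i)=i$ for $i \in [m]$, with $h(k):=1$ if $k$ is odd), and define $g := h \circ \phi_0 : V(G) \to [m]$. Since no fiber of $h$ is a singleton, deleting one element from $[k]$ cannot shrink its image under $h$, so $g(N_G(u)) \supseteq h([k] \setminus \{\phi_1(u)\}) = [m]$, verifying that $g$ is a total domatic $m$-coloring of $G$. For $d(G \cart K_2) \le 2d(G)+1$, start instead from a domatic $k$-coloring: the closed-neighborhood condition at $(u,0)$ gives $\phi_0(N_G[u]) \supseteq [k] \setminus \{\phi_1(u)\}$, and the same $h$ yields a domatic $m$-coloring of $G$.

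The only real subtlety is the odd-$k$ case, which is precisely why the fibers of $h$ are asked to have size at least two: this prevents the single missing color $\phi_1(u)$ from being the sole element of its fiber, which would otherwise kill a color of $g$.
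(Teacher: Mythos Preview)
Your proof is correct and is essentially the same argument as the paper's. Both restrict attention to one $G$-layer of $G\cart K_2$ and observe that, for a vertex $(u,0)$, at most one of the $k$ colors (namely $\phi_1(u)$) can be supplied by the other layer, so pairing colors two-by-two yields a (total) domatic $\lfloor k/2\rfloor$-coloring of $G$; your surjection $h$ with fibers of size at least two is just an explicit way of carrying out that pairing, including the odd-$k$ case.
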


\begin{proof}
Let us consider the graph $G\cart K_2$. By Theorem \ref{cccartub}, $d_t(G\cart K_2)\geq \max\{d(G),2\}=d(G)$.  Let $k=d_t(G\cart K_2)$ and let $v_1,v_2$ be the vertices of $K_2$. Let $D_1,D_2,\ldots, D_{k}$ be a total domatic partition of $V(G\cart K_2)$. It is easy to observe that for any two $i,j\in[k]$, the set $\{u:(u,v)\in D_i\cup D_j\}$ is a total dominating set of $G$.
Thus $d(G)\geq d_t(G)\geq \left\lfloor\frac{k}{2}\right\rfloor$ which in turns that $k=d_t(G\cart K_2)\leq2d_t(G)+1\leq 2d(G)+1$.
Similar arguments hold for the domatic  partition of $V(G\cart K_2)$ and we get $d(G\cart K_2)\leq 2d(G)+1$.
\end{proof}

\begin{figure}
  \centering
  \includegraphics[width=8cm]{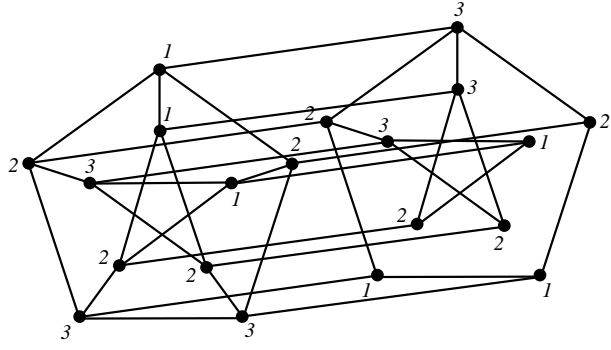}
  \caption{$3$-total domatic coloring of $G\cart K_2$, where $G$ is a Peterson graph}\label{peterk2}
\end{figure}

\noindent There is an example of graph $G$ such that $d_t(G\cart K_2)>d(G)$. Let $G$ be a Peterson graph, we have $d_t(G)=d(G)=2$ and $d_t(G\cart K_2)=3$. The $3$-total domatic coloring of $G\cart K_2$ has been shown in Figure \ref{peterk2}. By (iii) of Proposition \ref{DiviReg}, $4$-total domatic coloring is not possible as $4^2\nmid 40$, where $|E(G\cart K_2)|=40$.
The upper bound $2d_t(G)+1$ for $d_t(G\cart K_2)$ is tight for graphs $K_{n}$,  $n$ odd and $C_{3k}$, $4\nmid k$. We could not find a graph $G$ such that $d_t(G\cart K_2)=2d(G)$. This leads us to ask the following question.

\begin{prob}
Let $G$ be a graph without an isolated vertex, find the smallest constant $c$ such that $d_t(G\cart K_2)\leq cd(G)+O(1)$.
\end{prob}

Let $G$ be the complement of a perfect matching. It is easy to observe that $d(G)=\frac{|V(G)|}{2}$ and $d(G\cart K_2)=|V(G)|=2d(G)$.
Also, we could not find a graph $G$ such that $d(G\cart K_2)=2d(G)+1$. This leads us to ask a question: Is there any graph $G$ such that $d(G\cart K_2)>2d(G)$?

Now, let us consider a lower bound for the domatic and total domatic numbers of Cartesian product of a graph and a bipartite graph in terms of its domatic and total domatic numbers. The bound given in Theorem \ref{ccbicart} has been applied multiple times in this paper.

\begin{thm}\label{ccbicart}
Let $G$ be a graph and $H$ be a bipartite graph, we have
\begin{enumerate}[(i)]
\setlength\itemsep{-2pt}
\item $ d_t(G\cart H)\geq 2\min\{d_t(G),d_t(H)\}$ and
\item  $d(G\cart H)\geq 2\min\{d(G),d_t(H)\}$.
\end{enumerate}
\end{thm}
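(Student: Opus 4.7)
The plan is to exhibit, in both parts, an explicit $(2k)$-coloring of $V(G\cart H)$, where $k=\min\{d_t(G),d_t(H)\}$ for (i) and $k=\min\{d(G),d_t(H)\}$ for (ii). I would start by fixing a partition $\{D_0,\ldots,D_{k-1}\}$ of $V(G)$ of the appropriate type (total domatic for (i), merely domatic for (ii)) and a total domatic partition $\{E_0,\ldots,E_{k-1}\}$ of $V(H)$. Since $H$ is bipartite, write $V(H)=A\cup B$ with every edge of $H$ crossing, and let $\ell\colon V(H)\to\{0,1\}$ be the indicator of $B$.

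Using the colour set $\mathbb{Z}_k\times\{0,1\}$, the coloring I have in mind is
\[
f((u,v))=\bigl((i+j)\bmod k,\ \ell(v)\bigr),\qquad u\in D_i,\ v\in E_j.
\]
Two design features carry the argument. The additive twist $i+j\pmod k$ ensures that if either of $i$ or $j$ is allowed to range over $\mathbb{Z}_k$ while the other is fixed, the first coordinate of the colour sweeps out all of $\mathbb{Z}_k$. The second coordinate tracks the bipartition of $H$: it agrees with $\ell(v)$ on every $G$-neighbour of $(u,v)$ (which keeps the same $H$-coordinate) and flips on every $H$-neighbour (because $H$ is bipartite). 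In this way the two ``directions'' of neighbour are forced to contribute the two slices $\mathbb{Z}_k\times\{0\}$ and $\mathbb{Z}_k\times\{1\}$ separately, explaining the doubling from $k$ to $2k$.

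The verification reduces to two short checks. For a fixed vertex $(u,v)$ with $u\in D_i$, $v\in E_j$ and $v\in A$: the $G$-neighbours $(u',v)$, with $u'$ drawn from $N_G(u)$, meet every class $D_{i'}$ by the total domatic property, so their colours realise the entire slice $\mathbb{Z}_k\times\{0\}$; the $H$-neighbours $(u,v')$ have $v'\in B\cap N_H(v)$ meeting every $E_{j'}$ by the total domatic property of $\{E_{j'}\}$, so their colours realise $\mathbb{Z}_k\times\{1\}$. The case $v\in B$ is symmetric. For (ii) one only changes $N_G(u)$ to $N_G[u]$, which still meets every $D_{i'}$ by the merely dominating property, and additionally $(u,v)$ itself lies in the closed neighbourhood. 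I do not anticipate a real obstacle; the entire content is in choosing the colouring. The one point worth flagging is why the $H$-side assumption must remain \emph{total} domatic even in (ii): bipartiteness forces every $H$-neighbour of $(u,v)$ to lie in $V(H)\setminus\{v\}$, so a merely dominating partition of $H$ would not be enough to cover the flipped slice of colours.
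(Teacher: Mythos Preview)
Your proposal is correct and is essentially the same argument as the paper's: both fix $k$-colourings $g,h$ of $G,H$ and set $f((u,v))$ to encode the pair $\bigl((g(u)+h(v))\bmod k,\ \ell(v)\bigr)$, with the paper merely writing this pair as the single residue $2g(u)+2h(v)+\ell(v)\pmod{2k}$ in $\mathbb{Z}_{2k}$ rather than using $\mathbb{Z}_k\times\{0,1\}$. The verification that $G$-neighbours yield one parity slice and $H$-neighbours the other is identical in both.
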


\begin{proof}
(i) Let $G$ be a graph of order $m$ and $H$ be a bipartite graph of order $n$ with bipartition $[X,Y]$.
Let $\{u_i:i\in\mathbb{Z}_m\}$ and $\{v_j:j\in\mathbb{Z}_n\}$ be the vertices of $G$ and $H$ respectively. Let $k=\min\{d_t(G),d_t(H)\}$, there exists a total domatic coloring for $G$ and $H$ with $k$ colors.
Let $g$ and $h$ be a $k$-total domatic coloring of $G$ and $H$ respectively, and let $\mathbb{Z}_{k}$ be the $k$ colors. Now, let us define a coloring $f$ for the vertices of $G\cart H$. For $i\in\mathbb{Z}_m$ and $j\in\mathbb{Z}_n$,
 \begin{equation*}
 f((u_i,v_j))=\left\{\begin{array}{ll}
                                                      (2 g(u_i)+2h(v_j))\pmod{2k}   & \mathrm{if}~v_j\in X \\
                                                     (2 g(u_i)+2h(v_j)+1)\pmod{2k}   & \mathrm{if}~v_j\in Y.
                                                     \end{array}
                                                     \right. \eqno{(3.1)}
 \end{equation*}
For any vertex $u_i\in V(G)$ and $v_j\in X$, by the coloring $f$, the colors seen by the vertex $(u_i,v_j)$ from the neighbors of $u_i$ in $G_{v_j}$ are $\{(2s+2h(v_j))\pmod{2k}:s\in\mathbb{Z}_{k}\}=\{2l:l\in \mathbb{Z}_{k}\}$ and from the neighbors of  $v_j$ in $H_{u_i}$ are $\{(2g(u_i)+2t+1)\pmod{2k}:t\in\mathbb{Z}_{k}\}=\{2l+1:l\in\mathbb{Z}_{k}\}$.
Similarly, for any vertex $u_i\in V(G)$ and $v_j\in Y$, the colors seen by the vertex $(u_i,v_j)$ from the neighbors of $u_i$ in $G_{v_j}$ are  $\{2l+1:l\in\mathbb{Z}_{k}\}$ and from the neighbors of $v_j$ in $H_{u_i}$ are $\{2l:l\in\mathbb{Z}_{k}\}$.
Each vertex $(u_i,v_j)$ in $G\cart H$ sees all the colors $\mathbb{Z}_{2k}$ in its open neighborhood and thus $f$ is a total domatic coloring using $2k$ colors. Hence $d_t(G\cart H)\geq2k=2\min\{d_t(G),d_t(H)\}$.

(ii) Let $k=\min\{d(G),d_t(H)\}$. Let $g$ be a $k$-domatic coloring of $G$ and $h$ be a $k$-total domatic coloring of $H$.
For any vertex $u_i\in V(G)$ and $v_j\in X$, by the coloring $f$ defined in Equation (3.1), the colors seen by the vertex $(u_i,v_j)$ from the closed neighborhood of $u_i$ in $G_{v_j}$ are $\{(2s+2h(v_j))\pmod{2k}:s\in\mathbb{Z}_{k}\}=\{2l:l\in \mathbb{Z}_{k}\}$ and from the open neighborhood of $v_j$ in $H_{u_i}$ are $\{(2g(u_i)+2t+1)\pmod{2k}:t\in\mathbb{Z}_{k}\}=\{2l+1:l\in\mathbb{Z}_{k}\}$.
Similarly, for any vertex $u_i\in V(G)$ and $v_j\in Y$, the colors seen by the vertex $(u_i,v_j)$ from the closed neighborhood of $u_i$ in $G_{v_j}$ are $\{2l+1:l\in\mathbb{Z}_{k}\}$ and from the open neighborhood of $v_j$ in $H_{u_i}$ are $\{2l:l\in\mathbb{Z}_{k}\}$.
Each vertex $(u_i,v_j)$ in $G\cart H$ contains vertices of all the colors $\mathbb{Z}_{2k}$ in its closed neighborhood and thus $f$ is a domatic coloring using $2k$ colors. Hence $d(G\cart H)\geq 2\min\{d(G),d_t(H)\}$.

If at least one of these graphs $G$, $H$ is disconnected, then apply the same technique to each component of $G\cart H$ separately.
\end{proof}

The bound given in Theorem \ref{ccbicart} for $d(G\cart H)$ is tight, which follows by taking $G\cong K_{n}$ and $H\cong K_{n,n}$ and
also, the bound for $d_t(G\cart H)$ is tight by taking $G\cong K_{2n}$ and $H\cong K_{n,n}$.
One of the simplest examples such that a strict inequality holds in Theorem \ref{ccbicart} is $G\cong C_{6n}$, when $n$ is odd. By Theorem \ref{cccartub}, we have $d_t(C_{6n}\cart C_{6n})\geq d(C_{6n})=3$ but $d_t(C_{6n})=1$.
Theorem \ref{ccbicart} is not true for all graphs in general. For $G\cong K_2\cart K_3$, we have $d_t(G)=3$.
Since $G\cart G$ is $6$-regular,
the neighbors of each vertex should be colored distinctly when $d_t(G\cart G)=6$. There exists a color which occurs at least twice in the subgraph $K_3\cart K_3$ of $G\cart G$ and
hence there exists a vertex which contains the same color in its two neighbors, a contradiction.
Thus $d_t(G\cart G)< 6=2d_t(G)$.
We can extend the
above idea to show that there exists graph $G$ such that
$d_t(G \cart G)\leq d(G \cart G) \leq d_t(G) + O\left(\sqrt{d_t(G)}\right)$.

\begin{prop}
For the graph $G = K_n \cart K_2$,
$d_t(G \cart G) \leq d(G\cart G) \leq d_t(G) + \sqrt{2d_t(G)}$.
\end{prop}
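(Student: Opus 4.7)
The plan is to show that every color class in a $k$-domatic coloring of $G\cart G$ is a dominating set whose size is at least roughly $\tfrac{4n^2}{n+\sqrt{2n}}$, where $n = d_t(G)$; this forces $k \leq n + \sqrt{2n}$.

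First, decompose $V(G\cart G)$ into the four $K_n\cart K_n$-layers $L_{bd}$ (indexed by $b,d\in\{1,2\}$) obtained by fixing the two $K_2$-coordinates. For $v\in L_{bd}$, the closed neighborhood $N[v]$ consists of the $2n-1$ vertices on the row and column of $v$ inside $L_{bd}$, together with the two $K_2$-flips in the adjacent layers $L_{b'd}$ and $L_{bd'}$; the four layers are arranged as a $C_4$ under the $K_2$-flip relation. Fix a color class $C$, write $a_{bd}=|C\cap L_{bd}|$, and let $\alpha_{bd},\beta_{bd}$ be the numbers of rows and columns of $L_{bd}$ that $C$ meets. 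The vertices of $L_{bd}$ not dominated from within $L_{bd}$ form a combinatorial rectangle of $(n-\alpha_{bd})(n-\beta_{bd})$ cells; each must be dominated from an adjacent layer through its unique $K_2$-partner, giving the cyclic inequalities $(n-a_{bd})^2 \leq (n-\alpha_{bd})(n-\beta_{bd}) \leq a_{b'd}+a_{bd'}$ (valid when $a_{bd}\leq n$, using $\alpha_{bd},\beta_{bd}\leq a_{bd}$).

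Summing these four inequalities over $(b,d)$ and applying Cauchy--Schwarz to the terms $n-a_{bd}$ yields $(4n-|C|)^2 \leq 8|C|$, which solves to $|C|\geq 4(n+1-\sqrt{2n+1})$. Since the $k$ color classes partition the $4n^2$ vertices of $G\cart G$, this bound gives $k\leq n^2/(n+1-\sqrt{2n+1}) = n+1+\sqrt{2n+1}$, which is within an additive constant of the target. The main obstacle is closing this $O(1)$ gap between the continuous bound and the claimed $n+\sqrt{2n}$: the Cauchy--Schwarz step is saturated only at the symmetric configuration $a_{bd}\equiv a$, so the final sharpening exploits the integrality of the $a_{bd}$ on the four-vertex cycle (a short case analysis of the four cyclic integer constraints showing that the symmetric minimizer is barely integer-realizable in the tight regime) to recover the exact constant $\sqrt{2}$ in front of $\sqrt{d_t(G)}$.
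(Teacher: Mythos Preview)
Your structural setup is the same as the paper's: you correctly view $G\cart G$ as $(K_n\cart K_n)\cart C_4$, decompose into four layers, and observe that the vertices in a layer not dominated from within that layer form an $(n-\alpha)\times(n-\beta)$ combinatorial rectangle that must be covered by the two adjacent layers. The inequality $(n-a_{bd})^2 \le a_{b'd}+a_{bd'}$ (when $a_{bd}\le n$) is sound.

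The gap is in how you combine the four layer inequalities. By summing them symmetrically and applying Cauchy--Schwarz you lose exactly the constant you need: your argument yields only $|C|\ge 4(n+1-\sqrt{2n+1})$ and hence $d(G\cart G)\le n+1+\sqrt{2n+1}$, which is weaker than the claimed $n+\sqrt{2n}$ by roughly~$1$. Your proposed fix, an appeal to ``integrality of the $a_{bd}$ on the four-vertex cycle'', is not carried out and is not obviously sufficient; the slack comes from the convexity loss in Cauchy--Schwarz, not from non-integrality.

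The paper avoids this loss by arguing asymmetrically instead of symmetrically. Rather than summing all four inequalities, it picks the layer with the \emph{fewest} vertices of the chosen color, say $|R_0|=n-k$, and uses only the single rectangle bound there: $|R_1|+|R_3|\ge k^2$. Combined with $|R_2|\ge |R_0|$ (by minimality), this gives $|C|\ge 2(n-k)+k^2$ directly, with no averaging loss. A preliminary pigeonhole step (if there were more than $n+\sqrt{2n}$ colors, some color class would have at most $4(n-\sqrt{2n})$ vertices, forcing $k\ge\sqrt{2n}$) then yields $|C|\ge 4n-2\sqrt{2n}>4(n-\sqrt{2n})$, a contradiction. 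In short, replacing your Cauchy--Schwarz step by ``focus on the minimum layer and use $a_{22}\ge a_{11}$'' closes the gap exactly; this is what your sketch is missing.
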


\begin{proof}\label{cccartKnK2KnK2}

From Corollary~\ref{cccartGKn} we know that $d_t(G) =d(G)= n$ and we will show
that $d(G \cart G) \leq n + \sqrt{2n}$. Suppose that there exists a
domatic coloring $f$ of $G \cart G$ using more than $n + \sqrt{2n}$ colors,
then there exists a color (call it red) which appears in at most $4(n -
\sqrt{2n})$ vertices of $G \cart G$.  Otherwise, we get a contradiction, since
$4(n - \sqrt{2n} + 1)(n + \sqrt{2n} + 1) = 4(n^2 + 1) > |V(G \cart G)|$.

Observe that $G \cart G$ is isomorphic to $H \cart C_4$, where $H = K_n
\cart K_n$. We label the vertices of $C_4$ with $\mathbb{Z}_4$ and the
copy of $H$ corresponding to vertex $i$ of this $C_4$ will be called
$H_i$. Let $R_i$ denote the set of vertices in $H_i$ which are colored red by
$f$. Since $|R_0 \cup \cdots \cup R_3| \leq 4(n- \sqrt{2n})$, the smallest
of them, without loss of generality say $R_0$, has at most $n - \sqrt{2n}$
vertices. Let $k = n - |R_0|$, and note that $k \geq \sqrt{2n}$.

The coloring $f$ on $H_0$ can be represented by an $n \times n$ matrix $M_0$.
Since $|R_0| = n - k$, there exists at least $k$ rows and $k$ columns of $M_0$
which do not contain any red vertex. Hence the $k^2$ vertices corresponding to
the $k \times k$ submatrix determined by the above rows and columns are neither red, nor they see
a red neighbor in $H_0$.  Hence each of them have a red neighbor in $H_1$ or
$H_3$. Since each vertex in $V(H_1) \cup V(H_3)$ is adjacent to exactly one
vertex in $V(H_0)$, we can conclude that $|R_1 \cup R_3| \geq k^2$. But now,
since $|R_2| \geq |R_0|$, we get $|R_0 \cup \cdots \cup R_3| \geq 2(n-k) +
k^2$. Since $k^2 - 2k$ is an increasing function of $k$ for $k \geq 1$, the
above lower bound is at least $4n - 2\sqrt{2n}$ for any $k \geq \sqrt{2n}$.
This contradicts our earlier upper bound on $|R_0 \cup \cdots \cup R_3|$.
\end{proof}

One of the consequences of Theorem \ref{ccbicart}  is Corollary \ref{ccbicarteq}. 

\begin{cor} \label{ccbicarteq}
Let $d$ be a positive integer, $n$ and $k$ be powers of $2$, $k\leq n$. If $G$ is a graph with $d(G)\geq kd$ and for $1\leq i \leq n$, $H_i$ is a bipartite graph such that $d_t(H_i)\geq d$,  then $d_t(\mathop{\cart}\limits_{i=1}^{n}H_i)\geq nd$ and $d(G\cart(\mathop{\cart}\limits_{i=1}^{n-k}H_i))\geq nd$.
\end{cor}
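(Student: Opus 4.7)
The plan is to apply Theorem \ref{ccbicart} iteratively in a power-of-two ``doubling'' fashion, exploiting the hypotheses that both $n$ and $k$ are powers of two.

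First, for the total-domatic bound $d_t(\mathop{\cart}\limits_{i=1}^n H_i) \geq nd$, I would organize the $H_i$'s into a balanced binary tree of depth $\log_2 n$ (an integer since $n$ is a power of two). At each internal node, I combine its two (bipartite) children via Theorem \ref{ccbicart}(i). Because the Cartesian product of bipartite graphs is bipartite, the bipartiteness hypothesis of~(i) is preserved up the tree. A short induction on the level shows that at level $\ell$ each combined graph has $d_t \geq 2^\ell d$: both children share the same lower bound and~(i) doubles the minimum. At the root we obtain $d_t \geq 2^{\log_2 n} d = nd$.

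For the domatic bound $d(G \cart \mathop{\cart}\limits_{i=1}^{n-k} H_i) \geq nd$, I would combine~(i) and~(ii) in stages. Set $m = \log_2(n/k)$ and partition the $n-k$ graphs into groups $P_1, \ldots, P_m$ with $|P_j| = 2^{j-1} k$; this is consistent because $\sum_{j=1}^m 2^{j-1} k = k(2^m - 1) = n - k$, and each $|P_j|$ is itself a power of two. Applying the binary-tree construction above separately to each $P_j$ yields a single bipartite graph $B_j$ with $d_t(B_j) \geq 2^{j-1} k d$.

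I then iterate Theorem \ref{ccbicart}(ii). Let $G_0 = G$, so $d(G_0) \geq kd$, and for $j = 1, \ldots, m$ set $G_j = G_{j-1} \cart B_j$. I claim by induction on $j$ that $d(G_j) \geq 2^j k d$: in the inductive step $G_{j-1}$ has $d \geq 2^{j-1} kd$ and the bipartite factor $B_j$ has $d_t \geq 2^{j-1} kd$, so~(ii) gives $d(G_j) \geq 2 \cdot 2^{j-1} kd = 2^j kd$. At $j = m$ the graph $G_m \cong G \cart \mathop{\cart}\limits_{i=1}^{n-k} H_i$ satisfies $d(G_m) \geq 2^m kd = nd$, as required. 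The main subtlety lies in choosing the partition sizes: the geometric progression $2^{j-1} k$ is exactly what keeps~(ii) balanced at every stage (the accumulated $d$ on the left matches the $d_t$ on the right), and requiring each $|P_j|$ to be a power of two so that the binary-tree doubling goes through cleanly is precisely what forces the power-of-two hypotheses on both $n$ and $k$.
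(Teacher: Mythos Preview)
Your proof is correct, and for the first claim it is identical to the paper's: both build a balanced binary tree on the $H_i$'s and double via Theorem~\ref{ccbicart}(i) at each level, using that Cartesian products of bipartite graphs remain bipartite.

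For the second claim the paper uses a slightly different decomposition. Instead of your geometric grouping $|P_j| = 2^{j-1}k$, it partitions $H_1, \ldots, H_{n-k}$ into $\frac{n}{k}-1$ \emph{uniform} blocks of size $k$, builds each into a bipartite $H_j'$ with $d_t(H_j') \geq kd$, and then places the $\frac{n}{k}$ items $G, H_1', \ldots, H'_{n/k-1}$ (again a power of two) at the leaves of a second binary tree; at every level the leftmost pair (the one containing $G$) is combined via~(ii) and all other pairs via~(i). Your sequential fold uses only~(ii) in the outer stage, at the cost of requiring blocks of growing size so that $d_t(B_j)$ keeps pace with the accumulated $d(G_{j-1})$; the paper keeps block sizes uniform but must interleave~(i) and~(ii) in the second tree. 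Both routes are equally short and rest on the same key lemma, so the difference is purely organisational.
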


\begin{proof}
For each $j$ taken in the increasing order $1,2,\ldots,\frac{n}{2}$, repeated application of Theorem \ref{ccbicart} to pairs of bipartite graphs that have a total domatic coloring with $jd$ colors yields the bipartite graphs having a total domatic coloring with $2jd$ colors. Finally, we get $d_t(\mathop{\cart}\limits_{i=1}^{n}H_i)\geq nd$. Now, let $G$ be a graph  such that $d(G)\geq kd$.
First we group $H_1,H_2,\ldots,H_{n-k}$ into groups of size $k$ each
and apply the above argument to get the bipartite graphs $H_1',H_2',\ldots,H_{\frac{n}{k}-1}'$ with $d_t(H_j')\geq kd$, $1\leq j\leq \frac{n}{k}-1$.
Now, consider the graphs in this order $G,H_1',H_2',\ldots,H_{\frac{n}{k}-1}'$. Apply Theorem \ref{ccbicart} (ii) to the first pair and Theorem \ref{ccbicart} (i) to the remaining pairs of graphs repeatedly as mentioned above to obtain the final result.
Thus $d(G\cart(\mathop{\cart}\limits_{i=1}^{n-k}H_i))\geq (\frac{n}{k})kd=nd$.
\end{proof}

\section{Hamming graphs}

Let us start this section by re-derive the results  for the domatic number of the hypercubes $Q_{n-1}$ and $Q_n$ \cite{zel4} and the total domatic number of the hypercube $Q_n$ \cite{chen}, when $n$ is a power of $2$.

\begin{cor} \label{ccbicarteqQn}
Let $n$ be a powers of $2$, we have $d_t(Q_{n})=d_t(Q_{n+1})=n$ and $d(Q_{n-1})=d(Q_{n})=n$.
\end{cor}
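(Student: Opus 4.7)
The plan is to prove each of the four equalities separately by combining a regularity-plus-divisibility upper bound with a lower bound obtained from Corollary~\ref{ccbicarteq} applied to the factorisation $Q_m = \mathop{\cart}\limits_{i=1}^{m} K_2$. Throughout I take $n\ge 2$; the corner case $n=1$ is excluded in spirit since $Q_0$ has an isolated vertex.

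For the upper bounds, $m$-regularity of $Q_m$ immediately gives $d_t(Q_n)\le n$, $d(Q_{n-1})\le n$, $d_t(Q_{n+1})\le n+1$, and $d(Q_n)\le n+1$. To sharpen the last two to $n$, I rule out fullness. By Theorem~\ref{DiviRegDom}, $Q_n$ being domatically full would require $(n+1)\mid |V(Q_n)|=2^n$; by Proposition~\ref{DiviReg}(iii), $Q_{n+1}$ being total domatically full would require $(n+1)^2\mid |E(Q_{n+1})|=(n+1)\,2^n$, that is $(n+1)\mid 2^n$. But for $n\ge 2$ a power of $2$, $n+1$ is odd and at least $3$, so $(n+1)\nmid 2^n$; both fullness hypotheses fail and the two bounds drop to $n$.

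For the lower bounds, I apply Corollary~\ref{ccbicarteq} with $d=1$ and every bipartite factor taken to be $K_2$ (so that $d_t(H_i)=1$ and $d(K_2)=2$). The first conclusion, with $n$ copies of $K_2$, yields $d_t(Q_n)\ge n$. The second conclusion, with $G=K_2$ (so $d(G)=2\ge kd$), $k=1$, and $n-1$ bipartite factors, gives $d(Q_n)\ge n$; the same conclusion with $k=2$ and $n-2$ bipartite factors gives $d(Q_{n-1})\ge n$. Finally, viewing $Q_{n+1}=Q_n\cart K_2$, I invoke Theorem~\ref{cccartub} to get $d_t(Q_{n+1})\ge \max\{d(Q_n),d(K_2)\}=n$, using the bound $d(Q_n)\ge n$ just proved.

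The only mild obstacle is arithmetic bookkeeping: choosing the correct value of $k$ and the correct number of bipartite factors in Corollary~\ref{ccbicarteq} so that the hypercube on the left-hand side has the intended dimension, together with the elementary observation that an odd integer $n+1\ge 3$ cannot divide the power of two $2^n$. No new combinatorial construction beyond what is already packaged in Corollary~\ref{ccbicarteq} is needed.
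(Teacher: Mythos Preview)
Your proof is correct and follows essentially the same route as the paper: trivial regularity bounds, the divisibility obstructions from Proposition~\ref{DiviReg}(iii) and Theorem~\ref{DiviRegDom} to rule out fullness of $Q_{n+1}$ and $Q_n$, and Corollary~\ref{ccbicarteq} with all factors $K_2$ for the lower bounds. The only cosmetic differences are that the paper obtains $d(Q_n)\ge n$ and $d_t(Q_{n+1})\ge n$ via the one-step monotonicity $d(Q_n)\ge d(Q_{n-1})$ and $d_t(Q_{n+1})\ge d_t(Q_n)$ rather than a fresh application of Corollary~\ref{ccbicarteq} or Theorem~\ref{cccartub}, and it uses the vertex-count part of Proposition~\ref{DiviReg}(iii) instead of the edge-count part; the substance is identical.
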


\begin{proof}
It is easy to see that $d_t(Q_n)\leq n$ and $d(Q_{n-1})\leq n$. Since $d_t(K_2)=1$, $d(K_2)=2$, $Q_{n}\cong\mathop{\cart}\limits_{i=1}^{n} K_2$ and $Q_{n-1}\cong K_2\cart(\mathop{\cart}\limits_{i=1}^{n-2} K_2)$, by Corollary \ref{ccbicarteq}, we get $d_t(Q_{n})\geq n$ and
 $d(Q_{n-1})\geq n$.
Suppose $d_t(Q_{n+1})=n+1$, by (iii) of Proposition \ref{DiviReg}, $n+1$ should divides $2^{n+1}$, a contradiction. Thus $d_t(Q_{n+1})\leq n$.
Also,  $d_t(Q_{n+1})\geq d_t(Q_{n})= n $.
Similarly, by Theorem \ref{DiviRegDom}, we get $d(Q_n)\leq n$ and $d(Q_{n})\geq d(Q_{n-1})= n $.
\end{proof}

In \cite{hav}, Havel obtained that $d(Q_6)=5$. Also, $5=d(Q_6)\leq d_t(Q_7)\leq \left\lfloor\frac{2^7}{24}\right\rfloor=5$. 
We obtain a new lower bound for $d(Q_n)$ and $d_t(Q_n)$ for some values of $n$ in Corollary \ref{domqn}.

\begin{cor}\label{domqn}
Let $k,n$ be positive integers such that $ n\geq 2^k7-1$, we have $d(Q_n)\geq 2^k 5$ and $d_t(Q_{n+1})\geq 2^k 5$
\end{cor}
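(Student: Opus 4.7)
The plan is to feed the base facts $d(Q_6)=5$ (Havel~\cite{hav}) and $d_t(Q_7)=5$ (established in the paragraph preceding this corollary) into Corollary~\ref{ccbicarteq}, using $Q_7$ as the bipartite building block with total domatic number $5$ and $Q_6$ as the graph $G$ with domatic number $5$. Hypercubes are bipartite, so $Q_7$ is a legitimate choice of $H_i$.

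For the lower bound on $d_t(Q_{n+1})$: when $n+1\geq 7\cdot 2^k$, apply Corollary~\ref{ccbicarteq} with parameter $d=5$, $N=2^k$, and each bipartite graph $H_i$ taken to be $Q_7$ for $1\leq i\leq 2^k$. Since $\mathop{\cart}\limits_{i=1}^{2^k}Q_7 \cong Q_{7\cdot 2^k}$, this yields $d_t(Q_{7\cdot 2^k})\geq 2^k\cdot 5$. When $n+1>7\cdot 2^k$, write $Q_{n+1}\cong Q_{7\cdot 2^k}\cart Q_{n+1-7\cdot 2^k}$; Theorem~\ref{cccartub} then gives $d_t(Q_{n+1})\geq d(Q_{7\cdot 2^k})\geq d_t(Q_{7\cdot 2^k})\geq 2^k\cdot 5$.

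For the lower bound on $d(Q_n)$: when $n=7\cdot 2^k-1$, apply Corollary~\ref{ccbicarteq} with parameter $d=5$, $K=1$, $N=2^k$, $G=Q_6$ (so $d(G)=5=Kd$), and $H_i=Q_7$ for $1\leq i\leq 2^k-1$. Since $Q_6\cart \mathop{\cart}\limits_{i=1}^{2^k-1}Q_7 \cong Q_{6+7(2^k-1)}=Q_{7\cdot 2^k-1}$, the corollary gives $d(Q_{7\cdot 2^k-1})\geq 2^k\cdot 5$. For $n>7\cdot 2^k-1$, writing $Q_n\cong Q_{7\cdot 2^k-1}\cart Q_{n-(7\cdot 2^k-1)}$ and invoking Theorem~\ref{cccartub} extends the bound to all $n\geq 7\cdot 2^k-1$.

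There is essentially no obstacle here; the result falls straight out of Corollary~\ref{ccbicarteq} once the right base graphs are identified. The only point worth noting is the choice of building blocks: $Q_7$ acts as the bipartite factor of total domatic number $5$, while $Q_6$ serves as the graph $G$ of domatic number $5$ whose role is to trim the dimension by exactly~$1$, matching the two thresholds $7\cdot 2^k$ and $7\cdot 2^k-1$ that appear in the statement.
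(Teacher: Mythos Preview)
Your proof is correct and follows essentially the same route as the paper: both use $Q_6$ (with $d(Q_6)=5$) and $Q_7$ (with $d_t(Q_7)=5$) as building blocks fed into Corollary~\ref{ccbicarteq} with the parameters $d=5$, $n=2^k$, $k=1$. The only difference is that you spell out the extension to larger $n$ via Theorem~\ref{cccartub}, whereas the paper leaves that implicit in the final sentence.
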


\begin{proof}
Let $G\cong Q_{6}$ and $H_i\cong Q_{7}$, for $1\leq i\leq2^k$.
Clearly,  $d(G)=5$ and $d_t(H_i)=5$.
By Corollary \ref{ccbicarteq}, we get $d(G\cart(\mathop{\cart}\limits_{i=1}^{2^k-1}H_i))\geq 2^k5$ and $d_t(\mathop{\cart}\limits_{i=1}^{2^k}H_i)\geq 2^k5$.
Since  $n\geq 2^k7-1$, we have $d(Q_n)\geq 2^k 5$ and $d_t(Q_{n+1})\geq 2^k 5$.
\end{proof}

When $n$ is a power of $2$, we can also re-derive the following results on the domination and total domination numbers of the hypercubes $Q_{n-1}$ \cite{har} and $Q_n$ \cite{joh} respectively.

\begin{cor} \cite{har,joh}\label{ccbicarteqgam}
For a positive integer $k$, $\gamma(Q_{2^k-1})=2^{2^k-k-1}$, $\gamma_t(Q_{2^k})=2^{2^k-k}$, $\gamma(Q_{2^k})\leq2^{2^k-k}$ and $\gamma_t(Q_{2^k+1})\leq2^{2^k-k+1}$.
\end{cor}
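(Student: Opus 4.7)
The plan is to observe that every part of Corollary \ref{ccbicarteqgam} is an immediate consequence of Corollary \ref{ccbicarteqQn} combined with basic averaging and the results about domatically full / total domatically full graphs established in Proposition \ref{DiviReg} and Corollary \ref{domcor}. Set $n = 2^k$ throughout so that $|V(Q_n)| = 2^{2^k}$ and $|V(Q_{n-1})| = 2^{2^k-1}$.

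For the first equality, I would note that $Q_{2^k-1}$ is $(2^k-1)$-regular, and Corollary \ref{ccbicarteqQn} supplies $d(Q_{2^k-1}) = 2^k$, i.e.\ $Q_{2^k-1}$ is domatically full. Corollary \ref{domcor} then forces
\[
\gamma(Q_{2^k-1}) \;=\; \frac{|V(Q_{2^k-1})|}{(2^k-1)+1} \;=\; \frac{2^{2^k-1}}{2^k} \;=\; 2^{\,2^k-k-1}.
\]
For the second equality I would apply the same idea on the total side: $Q_{2^k}$ is $2^k$-regular and Corollary \ref{ccbicarteqQn} gives $d_t(Q_{2^k}) = 2^k = \delta(Q_{2^k})$, so $Q_{2^k}$ is total domatically full. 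Part (iv) of Proposition \ref{DiviReg} then yields
\[
\gamma_t(Q_{2^k}) \;=\; \frac{|V(Q_{2^k})|}{2^k} \;=\; 2^{\,2^k-k}.
\]

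For the two remaining upper bounds I would use the simple averaging principle: any $k$-domatic (resp.\ $k$-total domatic) coloring of a graph $G$ partitions $V(G)$ into $k$ dominating (resp.\ total dominating) sets, so the smallest such class has size at most $|V(G)|/k$, giving $\gamma(G) \leq |V(G)|/d(G)$ and $\gamma_t(G) \leq |V(G)|/d_t(G)$. Applying Corollary \ref{ccbicarteqQn} with $n = 2^k$ we get $d(Q_{2^k}) = 2^k$ and $d_t(Q_{2^k+1}) = 2^k$, so
\[
\gamma(Q_{2^k}) \;\leq\; \frac{2^{2^k}}{2^k} \;=\; 2^{\,2^k-k}, \qquad \gamma_t(Q_{2^k+1}) \;\leq\; \frac{2^{2^k+1}}{2^k} \;=\; 2^{\,2^k-k+1}.
\]

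There is really no substantive obstacle here: all the work is absorbed into Corollary \ref{ccbicarteqQn}. The only point requiring a little care is making sure to invoke the correct ``full'' result — Corollary \ref{domcor} for the domatic equality (denominator $r+1$) and Proposition \ref{DiviReg}(iv) for the total domatic equality (denominator $r$) — and to note that the two remaining statements are only upper bounds because $2^k+1$ does not divide $2^{2^k}$ (so $Q_{2^k}$ cannot be domatically full) and $2^k$ may fail to divide the total domination number of $Q_{2^k+1}$ tightly.
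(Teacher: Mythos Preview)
Your proof is correct and follows essentially the same route as the paper: invoke Corollary~\ref{ccbicarteqQn} to identify $Q_{2^k-1}$ and $Q_{2^k}$ as domatically and total domatically full, apply Corollary~\ref{domcor} and Proposition~\ref{DiviReg}(iv) respectively for the two equalities, and use the pigeonhole bound $\gamma(G)\le |V(G)|/d(G)$ (and its total analogue) for the two remaining inequalities. Your write-up is in fact a bit more explicit than the paper's, but the argument is identical.
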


\begin{proof}
By Corollary \ref{domcor}, \ref{ccbicarteqQn} and (iv) of Proposition \ref{DiviReg}, we get $\gamma(Q_{2^k-1})=\frac{2^{2^k-1}}{2^k}=2^{2^k-k-1}$ and $\gamma_t(Q_{2^k})=\frac{2^{2^k}}{2^k}=2^{2^k-k}$ respectively. Also, by Corollary \ref{ccbicarteqQn}, we have $\gamma(Q_{2^k})\leq \frac{2^{2^k}}{2^k}=2^{2^k-k}$ and $\gamma_t(Q_{2^k+1})\leq \frac{2^{2^k+1}}{2^k}=2^{2^k-k+1}$.
\end{proof}

Note that $\gamma(Q_{2^k})=2^{2^k-k}$ follows from the sphere bound mentioned in \cite{van} and
$\gamma_t(Q_{2^k+1})=2^{2^k-k+1}$ follows from the result proved by Azarija et al., \cite{aza} namely, $\gamma_t(Q_{n+1})=2\gamma(Q_n)$.

In Table \ref{tab1}, we have mentioned the present best bounds for $\gamma(Q_n)$ (see, Table $1$ in \cite{ber,van}) and $\gamma_t(Q_n)$ follows from the result $\gamma_t(Q_{n})=2\gamma(Q_{n-1})$ \cite{aza}. The bounds for $d(Q_n)$ and $d_t(Q_n)$ follows from Corollary \ref{ccbicarteqQn} (also see, \cite{chen,zel4}) and Corollary \ref{domqn}.


\begin{table}
  \centering
   \vskip.3cm
   \footnotesize
     \begin{tabular}{|c|c|c|c|c|}
     \hline
    $n$ & $\gamma(Q_n)$ & $\gamma_t(Q_n)$ & $d(Q_n)$ & $d_t(Q_n)$ \\ \hline
     $1$ & $1$ & $2$ & $2$ & $1$ \\
      $2$& $2$ & $2$ & $2$ & $2$ \\
     $3$ & $2$& $4$ & $4$ & $2$ \\
     $4$& $4$ & $4$ & $4$ & $4$\\
     $5$& $7$& $8$ & $4$ & $4$ \\
     $6$ & $12$& $14$ & $5$ & $4$ \\
     $7$ & $16$& $24$ & $8$ & $5$ \\
    $8$ & $32$& $32$ & $8$ & $8$ \\
     $9$& $62$& $64$ & $8$ & $8$ \\
     $10$& $107$-$120$& $124$ & $8$-$9$ & $8$ \\
     $11$& $180$-$192$& $214$-$240$ & $8$-$11$ & $8$-$9$ \\
     $12$& $342$-$380$& $360$-$384$ & $8$-$11$ & $8$-$11$ \\
     $13$& $598$-$704$ & $684$-$760$ & $10$-$13$ & $8$-$11$ \\
     $14$& $1171$-$1408$ & $1196$-$1408$& $10$-$13$ & $10$-$13$ \\
     $15$& $2^{11}$ & $2342$-$2816$& $16$ & $10$-$13$ \\
     $16$& $2^{12}$ & $2^{12}$ & $16$ & $16$\\
     $17$&$7377$-$2^{13}$ & $2^{13}$ & $16$-$17$ & $16$ \\
     \hline
   \end{tabular}
\caption{Bounds on $\gamma(Q_n)$, $\gamma_t(Q_n)$, $d(Q_n)$ and $d_t(Q_n)$, $1\leq n\leq 17$. Lower bound for $d(Q_{13})$, $d(Q_{14})$, $d_t(Q_{14})$ and $d_t(Q_{15})$ are new and follow from Corollary \ref{domqn}} \label{tab1}
   \end{table}

\normalsize

%

Now, let us start obtain a lower bound for the domatic and total domatic numbers of Hamming graphs $H_{n-1,q}$ and $H_{n,q}$ respectively when $n$ is power of $2$ and $q\geq2$.

\begin{thm}\label{H2r2l}
Let $q$ be an integer greater than $1$ and $n$ is a power of $2$, we have $d(H_{n-1,q})\geq n\left\lfloor \frac{q}{2}\right\rfloor$ and $d_t(H_{n,q})\geq n\left\lfloor \frac{q}{2}\right\rfloor$.
\end{thm}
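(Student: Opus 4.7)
The plan is to exhibit bipartite spanning subgraphs of $K_q$ with a large total domatic number and then invoke Corollary \ref{ccbicarteq}, using the general fact that adding edges cannot decrease either the domatic or the total domatic number (if $v$ sees all $k$ colors in $N_G(v)$, it certainly sees them in any superset). Since Corollary \ref{ccbicarteq} requires bipartite factors and $K_q$ is not bipartite for $q\geq 3$, the spanning subgraph trick is exactly what lets us import the earlier machinery.

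First I would define
\[
B_q \;=\;
\begin{cases}
K_{q/2,\,q/2} & \text{if } q \text{ is even},\\[2pt]
K_{(q-1)/2,\,(q+1)/2} & \text{if } q \text{ is odd},
\end{cases}
\]
viewed as a spanning subgraph of $K_q$ (for $q=2$ this is just $K_2$). A direct inspection shows $d_t(K_{a,b})=\min\{a,b\}$: color one side of size $a\le b$ with the $a$ distinct colors $1,\ldots,a$, and distribute the same $a$ colors across the side of size $b$ so that each color is used at least once; then every vertex has its entire opposite side in its open neighborhood and sees all colors. Hence $d_t(B_q)=\lfloor q/2\rfloor$ in every case, and $B_q$ is bipartite with no isolated vertex.

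Next I would observe that $\mathop{\cart}\limits_{i=1}^{n} B_q$ is a spanning subgraph of $H_{n,q}=\mathop{\cart}\limits_{i=1}^{n} K_q$, and similarly $K_q \cart \bigl(\mathop{\cart}\limits_{i=1}^{n-2} B_q\bigr)$ is a spanning subgraph of $H_{n-1,q}$. Now I apply Corollary \ref{ccbicarteq} with $d=\lfloor q/2\rfloor$:
\begin{itemize}
\setlength\itemsep{-2pt}
\item For $d_t(H_{n,q})$, take $k=1$ and each bipartite $H_i=B_q$ to obtain $d_t\bigl(\mathop{\cart}\limits_{i=1}^{n} B_q\bigr)\geq n\lfloor q/2\rfloor$, so $d_t(H_{n,q})\geq n\lfloor q/2\rfloor$ by edge-monotonicity.
\item For $d(H_{n-1,q})$, take $k=2$, $G=K_q$ (note $d(K_q)=q\geq 2\lfloor q/2\rfloor$, so the hypothesis $d(G)\geq kd$ holds), and the $n-2$ bipartite graphs equal to $B_q$. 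The corollary yields $d\bigl(K_q \cart (\mathop{\cart}\limits_{i=1}^{n-2} B_q)\bigr)\geq n\lfloor q/2\rfloor$, and again by edge-monotonicity $d(H_{n-1,q})\geq n\lfloor q/2\rfloor$.
\end{itemize}

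The only technical point is verifying the two small facts used as black boxes: that $d_t(K_{a,b})=\min\{a,b\}$ (routine) and that $d$ and $d_t$ are monotone under the addition of edges on a fixed vertex set (immediate from the definitions, since every neighborhood grows). Assuming these, there is no genuine obstacle; the whole argument is essentially a reduction of the Hamming graph to the hypercube-style setup of Corollary \ref{ccbicarteq} via a carefully chosen bipartite skeleton $B_q$ of each $K_q$ factor. The assumption that $n$ is a power of $2$ enters only through Corollary \ref{ccbicarteq}, which in turn requires it so that the iterated doublings in Theorem \ref{ccbicart} line up.
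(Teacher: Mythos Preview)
Your argument is correct and essentially identical to the paper's: both pass to the spanning bipartite subgraph $K_{\lceil q/2\rceil,\lfloor q/2\rfloor}$ of each $K_q$ factor, observe it has total domatic number $\lfloor q/2\rfloor$, and then invoke Corollary~\ref{ccbicarteq} (with $k=2$ and $d(K_q)=q\ge 2\lfloor q/2\rfloor$ for the domatic bound). You are simply a bit more explicit than the paper about the edge-monotonicity step and the value of $d_t(K_{a,b})$.
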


\begin{proof}
Let $K_{\left\lceil \frac{q}{2}\right\rceil, \left\lfloor \frac{q}{2}\right\rfloor}$ be a complete bipartite subgraph of $K_q$.
Any coloring which assigns same set of $\left\lfloor \frac{q}{2}\right\rfloor$ different colors to each part of $K_{\left\lceil \frac{q}{2}\right\rceil, \left\lfloor \frac{q}{2}\right\rfloor}$ is a total domatic coloring with  $\left\lfloor \frac{q}{2}\right\rfloor$ colors.
Since $n$ is a power of $2$, by Corollary \ref{ccbicarteq}, we have $d_t(H_{n,q})=d_t(\mathop{\cart}\limits_{i=1}^{n}K_{q})\geq d_t(\mathop{\cart}\limits_{i=1}^{n}K_{\left\lceil \frac{q}{2}\right\rceil, \left\lfloor \frac{q}{2}\right\rfloor})= n\left\lfloor\frac{q}{2}\right\rfloor$. Also, $d(K_q)=q\geq 2\left\lfloor\frac{q}{2}\right\rfloor$, by Corollary \ref{ccbicarteq}, we have $d(H_{n-1,q})=d(\mathop{\cart}\limits_{i=1}^{n-1}K_{q})\geq d(K_q\cart(\mathop{\cart}\limits_{i=1}^{n-2}K_{\left\lceil \frac{q}{2}\right\rceil, \left\lfloor \frac{q}{2}\right\rfloor}))\geq n\left\lfloor\frac{q}{2}\right\rfloor$.
\end{proof}

Note that an equality holds for the graphs $H_{1,q}$, $H_{2,2q}$, and the problem remains open for $H_{n,q}$, $n\geq4$ except $H_{n,2}$.

In \cite{chen} Chen et al., obtained the injective chromatic number of $H_{n,q}$ as follows.

\begin{thm}\cite{chen}\label{chiiHnq}
Let  $k,n$  be positive integers. If $q$ is a prime power and $n= \frac{(q^k-1)}{(q-1)}$, then $\chi_i(H_{n,q})=\chi(H_{n,q}^2)=q^k$.
\end{thm}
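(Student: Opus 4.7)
The plan is to translate the statement into coding-theoretic language: identifying $V(H_{n,q})$ with $\mathbb{F}_q^n$, the graph distance in $H_{n,q}$ coincides with Hamming distance, so a proper coloring of $H_{n,q}^2$ is exactly a partition of $\mathbb{F}_q^n$ into codes of minimum Hamming distance at least $3$. Both bounds on $\chi(H_{n,q}^2)$ then follow from classical facts about the $q$-ary Hamming code.

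For the upper bound $\chi(H_{n,q}^2) \leq q^k$, I would construct the coloring explicitly. Let $M$ be a $k \times n$ matrix over $\mathbb{F}_q$ whose columns form a complete set of representatives for the projective space $PG(k-1,q)$, that is, one nonzero vector chosen from each $1$-dimensional subspace of $\mathbb{F}_q^k$. Define $c(x) = M x^{\top}$. If two vertices $x \neq y$ receive the same color and differ in at most two coordinates, then $M(x-y)^{\top} = 0$ is a linear dependence among at most two columns of $M$, contradicting the fact that any two columns lie in distinct $1$-dimensional subspaces. So $c$ is a proper coloring of $H_{n,q}^2$ using $q^k$ colors. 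For $\chi(H_{n,q}^2) \geq q^k$, the Hamming sphere-packing bound does the work: an independent set $I$ in $H_{n,q}^2$ is a code with minimum Hamming distance at least $3$, so the Hamming balls of radius one around its elements are pairwise disjoint; each such ball has $1 + n(q-1) = q^k$ vertices, forcing $|I| \leq q^{n-k}$ and hence $\chi(H_{n,q}^2) \geq q^n / q^{n-k} = q^k$.

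Finally, I would deduce the injective chromatic number. Since any proper coloring of the square is automatically injective, $\chi_i(H_{n,q}) \leq \chi(H_{n,q}^2) = q^k$. For the matching lower bound, when $q \geq 3$ every pair of adjacent vertices of $H_{n,q}$ has a common neighbor (pick any third value in the coordinate where they differ), so $H_{n,q}^{(2)}$ and $H_{n,q}^2$ are the same graph, whence $\chi_i(H_{n,q}) = \chi(H_{n,q}^{(2)}) = q^k$. The main obstacle is the binary case $q = 2$, where $H_{n,q}^{(2)} \subsetneq H_{n,q}^2$ because adjacent vertices of $Q_n$ share no common neighbor; for that case one must verify separately that every $H_{n,q}^{(2)}$-independent set still has at most $2^{n-k}$ elements, which can be deduced from the perfectness of the binary Hamming code (each coset of the code is such an independent set, and no such set can be strictly larger without introducing a pair at Hamming distance exactly two).
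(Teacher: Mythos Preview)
The paper does not prove this theorem at all; it is quoted verbatim from Chen, Kim, Tait and Verstra\"ete \cite{chen} and used as a black box in Corollary~\ref{chicHnq}. So there is no ``paper's own proof'' to compare your attempt against. What you have written is essentially the standard coding-theoretic argument, and for $\chi(H_{n,q}^2)=q^k$ it is complete: the syndrome map $x\mapsto Mx^{\top}$ with $M$ a Hamming parity-check matrix gives the upper bound, and sphere packing gives the lower bound, for every prime power $q$.

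There is, however, a real gap in your treatment of $\chi_i$ when $q=2$. Your final sentence asserts that an independent set of $Q_n^{(2)}$ cannot exceed $2^{n-k}$ vertices ``without introducing a pair at Hamming distance exactly two'', but that is precisely the statement to be proved, and perfectness of the Hamming code only tells you that the cosets achieve $2^{n-k}$, not that nothing larger exists. A set avoiding Hamming distance exactly $2$ is \emph{not} a distance-$3$ code (pairs at distance $1$ are allowed), so the sphere-packing bound you used for $H_{n,q}^2$ does not apply. A clean fix is a counting argument: if $\chi_i(Q_n)\le n$ then, since $Q_n$ is $n$-regular, every vertex sees each colour exactly once in its neighbourhood; double-counting incidences between vertices and colour-class neighbours forces each colour class to have size $2^{n}/n$, which is not an integer when $n=2^{k}-1$ and $k\ge 2$. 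Hence $\chi_i(Q_n)\ge n+1=2^{k}$. Note also that your argument, and indeed the theorem as stated, fails at $q=2$, $k=1$: there $Q_1=K_2$ has $\chi_i=1\ne 2$, so one should really read the hypothesis as $k\ge 2$ when $q=2$.
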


Corollary \ref{chicHnq} is an immediate consequence of Theorem \ref{cccartub} and Theorem \ref{chiiHnq}.

\begin{cor}\label{chicHnq}
Let  $k,n$  be positive integers. If $q$ is a prime power and $n= \frac{(q^k-1)}{(q-1)}$,  then $d(H_{n,q})=q^k$ and $d_t(H_{n+1,q})\geq q^k$.
\end{cor}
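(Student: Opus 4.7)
The plan is to treat the two statements separately, reducing each to tools already available in the excerpt. First I would establish $d(H_{n,q})=q^k$. Because $H_{n,q}=\mathop{\cart}\limits_{i=1}^{n}K_q$ is $n(q-1)$-regular, and the hypothesis $n=(q^k-1)/(q-1)$ gives $n(q-1)=q^k-1$, the graph $H_{n,q}$ is $(q^k-1)$-regular. The standard inequality $d(G)\leq \delta(G)+1$ from the Preliminaries then yields $d(H_{n,q})\leq q^k$, so only the matching lower bound needs to be produced.

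For that lower bound I would invoke Theorem \ref{chiiHnq}, which asserts $\chi_i(H_{n,q})=\chi(H_{n,q}^2)=q^k$. Since by definition $E(H_{n,q}^2)=E(H_{n,q})\cup E(H_{n,q}^{(2)})$, any proper coloring of $H_{n,q}^2$ is simultaneously a proper coloring of $H_{n,q}$ and a coloring in which no two vertices with a common neighbor share a color, i.e., a proper injective coloring of $H_{n,q}$ using $q^k=r+1$ colors where $r=q^k-1$ is the regularity. Corollary \ref{domcor} then says precisely that an $r$-regular graph admitting a proper $(r+1)$-injective coloring is domatically full, so $d(H_{n,q})=r+1=q^k$, matching the upper bound.

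For the second assertion I would write $H_{n+1,q}\cong H_{n,q}\cart K_q$ and appeal to the lower bound in Theorem \ref{cccartub}, namely $d_t(G\cart H)\geq\max\{d(G),d(H)\}$. With $G=H_{n,q}$ and $H=K_q$ this gives
\[
d_t(H_{n+1,q})\;\geq\;\max\{d(H_{n,q}),\,d(K_q)\}\;=\;\max\{q^k,\,q\}\;=\;q^k,
\]
using the first part together with $k\geq 1$, $q\geq 2$.

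I do not foresee a genuine obstacle; the only subtle point worth flagging is that in general $\chi_i(G)=\chi(G^{(2)})$ can be strictly smaller than $\chi(G^2)$, so a minimum injective coloring need not be proper. What makes the argument go through is that Theorem \ref{chiiHnq} explicitly supplies the equality $\chi_i(H_{n,q})=\chi(H_{n,q}^2)$, which is exactly the hypothesis Corollary \ref{domcor} requires in order to convert an injective coloring into certification of being domatically full.
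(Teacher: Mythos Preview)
Your proof is correct and follows essentially the same route as the paper: use Theorem~\ref{chiiHnq} to obtain a proper $(r+1)$-injective coloring of the $r$-regular graph $H_{n,q}$ (with $r=q^k-1$), apply Corollary~\ref{domcor} to conclude $d(H_{n,q})=q^k$, and then invoke the lower bound of Theorem~\ref{cccartub} on $H_{n,q}\cart K_q$ for the second claim. Your explicit remark that the equality $\chi_i(H_{n,q})=\chi(H_{n,q}^2)$ (rather than merely $\chi_i=\chi(G^{(2)})$) is what guarantees a \emph{proper} injective coloring is a worthwhile clarification that the paper leaves implicit.
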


\begin{proof}
By Theorem \ref{chiiHnq}, we have $\chi_i(H_{n,q})=\chi(H^{2}_{n,q})=q^k=n(q-1)+1$.
Since $H_{n,q}$ is $n(q-1)$-regular and there is a proper injective coloring with $n(q-1)+1$ colors, by Corollary \ref{domcor} we get $d(H_{n,q})=q^k$.  By Theorem \ref{cccartub}, we have $d_t(H_{n+1,q})=d_t(H_{n,q}\cart K_q)\geq d(H_{n,q})=q^k$.
\end{proof}

As a consequence of Theorem \ref{cccartub}, \ref{H2r2l} and Corollary \ref{ccbicarteq}, \ref{chicHnq}, we get Corollary \ref{cccartHnq}

\begin{cor} \label{cccartHnq}
 Let $n$ be a positive integer and $q$ be a prime power. If $k$ is the largest positive integer such that  $\frac{q^{k}-1}{q-1}\leq n$, $j$ is the smallest positive integer such that $q^k\leq2^j\left\lfloor\frac{q}{2}\right\rfloor$ and $i$ is  the largest positive integer such that $\frac{q^{k}-1}{q-1}+(2^{i}-1)2^j\leq n$, then $d(H_{n,q})\geq2^{i} q^k$ and $d_t(H_{n+1,q})\geq2^{i} q^k$.
\end{cor}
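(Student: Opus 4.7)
The plan is to assemble the bound by (a) identifying an exact Hamming graph $G := H_{m,q}$ with $m = (q^k-1)/(q-1)$ whose domatic number is $q^k$ via Corollary \ref{chicHnq}, (b) building bipartite amplifier factors of total domatic number at least $q^k$ from the bipartite spanning subgraph $B := K_{\lceil q/2\rceil,\lfloor q/2\rfloor}$ of $K_q$, and (c) invoking Corollary \ref{ccbicarteq} to combine them, then passing the resulting bound up to $H_{n,q}$ and $H_{n+1,q}$ via Theorem \ref{cccartub}.

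More concretely, for each $l \in \{1, \ldots, 2^i - 1\}$ I would take $H_l := \mathop{\cart}\limits_{r=1}^{2^j} B$. Since $B$ is bipartite, so is $H_l$, and the same two-parts-same-palette coloring used in the proof of Theorem \ref{H2r2l} shows $d_t(B) \geq \lfloor q/2\rfloor$. Applying the first conclusion of Corollary \ref{ccbicarteq} with the power-of-$2$ count $2^j$ of bipartite factors gives $d_t(H_l) \geq 2^j\lfloor q/2\rfloor \geq q^k$, where the last inequality is the defining property of $j$. I then apply the second conclusion of Corollary \ref{ccbicarteq} with the graph $G$ (using $d(G) = q^k$), the $2^i - 1$ bipartite graphs $H_l$, and the powers of $2$ chosen as $n = 2^i$ and $k = 1$; the hypothesis $d(G) \geq 1 \cdot q^k$ is met, so
\[ d\bigl(G \cart \mathop{\cart}\limits_{l=1}^{2^i - 1} H_l\bigr) \geq 2^i q^k. \]

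To finish, I would observe that $G \cart \mathop{\cart}\limits_{l=1}^{2^i - 1} H_l$ is a spanning subgraph of $H_{m + (2^i - 1)2^j,\,q}$ because $B$ spans $K_q$, and any domatic coloring extends to a spanning supergraph (closed neighborhoods only grow). Hence $d(H_{m + (2^i - 1)2^j,\,q}) \geq 2^i q^k$. The assumption $m + (2^i - 1)2^j \leq n$ then lets me decompose $H_{n,q}$ as a Cartesian product of $H_{m + (2^i - 1)2^j,\,q}$ with the remaining $H_{n - m - (2^i-1)2^j,\,q}$ factor (or there is no residual factor when equality holds), and Theorem \ref{cccartub} gives $d(H_{n,q}) \geq 2^i q^k$. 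The bound $d_t(H_{n+1,q}) \geq 2^i q^k$ is then immediate from Theorem \ref{cccartub} applied to $H_{n+1,q} = H_{n,q} \cart K_q$. I do not foresee a genuine obstacle here; the only care needed is the bookkeeping that each invocation of Corollary \ref{ccbicarteq} sees the required power-of-$2$ parameters and the inequality $d(G) \geq kd$, all of which are arranged by the definitions of $m$, $j$, and $i$.
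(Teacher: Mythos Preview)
Your proof is correct and follows essentially the same route as the paper: set $G=H_{m,q}$ with $m=(q^k-1)/(q-1)$ so that $d(G)=q^k$ by Corollary~\ref{chicHnq}, build the bipartite blocks $H_l=\mathop{\cart}_{r=1}^{2^j}K_{\lceil q/2\rceil,\lfloor q/2\rfloor}$ with $d_t(H_l)\ge 2^j\lfloor q/2\rfloor\ge q^k$, apply Corollary~\ref{ccbicarteq} (with parameters $n=2^i$, $k=1$, $d=q^k$) to obtain $d\bigl(G\cart\mathop{\cart}_{l=1}^{2^i-1}H_l\bigr)\ge 2^iq^k$, and then pass to $H_{n,q}$ and $H_{n+1,q}$ via the spanning-subgraph monotonicity and Theorem~\ref{cccartub}. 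The only cosmetic difference is that the paper absorbs the residual $K_q$ factors in one step rather than splitting off $H_{n-m-(2^i-1)2^j,\,q}$ explicitly, but the logic is identical.
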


\begin{proof}
Let $n'=\frac{q^{k}-1}{q-1}$ and $G\cong H_{n',q}$. For $0\leq l<2^i$, let $G_l\cong H_{2^j,q}$.
It is clear from the proof of Theorem \ref{H2r2l}, there exists a spanning bipartite subgraph of $G_l$ that have a total domatic coloring with $2^{j}\left\lfloor\frac{q}{2}\right\rfloor$ colors, let it be $H_l$.
Clearly,  $d_t(H_l)\geq2^{j}\left\lfloor\frac{q}{2}\right\rfloor\geq q^k$ and by Corollary \ref{chicHnq}, $d(G)=q^k$.
By Corollary \ref{ccbicarteq}, we get $d(G\cart(\mathop{\cart}\limits_{l=1}^{2^i-1}H_l))\geq 2^i q^k$.
Since  $n\geq n'+(2^{i}-1)2^j$,  we have $d(H_{n,q})\geq d(H_{n',q}\cart(\mathop{\cart}\limits_{l=1}^{2^i-1}G_l))\geq d(G\cart(\mathop{\cart}\limits_{l=1}^{2^i-1}H_l))\geq 2^i q^k$.
By Theorem \ref{cccartub}, we have $d_t(H_{n+1,q})\geq d(H_{n,q})\geq2^i q^k$.
\end{proof}

\section{Tori}

In this section, we examine the $d$-dimensional tori which are domatically and total domatically full.
First we obtain some sufficient conditions for the tori which are total domatically full.

Corollary \ref{cccartcycle} as an immediate consequence of Corollary \ref{ccbicarteq}
 and the fact that $d_t(C_n)=2$ if and only if  $n\equiv 0\pmod4$.

\begin{cor}\label{cccartcycle}
Let $d, k_1,k_2,\ldots,k_{d}$ be positive integers such that $d$ is a power of $2$ and $k_i\equiv 0\pmod4$, $1\leq i\leq d$, we have $d_t(\mathop{\cart}\limits_{i=1}^{d}C_{k_i})=2d$ and $\gamma_t(\mathop{\cart}\limits_{i=1}^{d} C_{k_i})=({\mathop\prod\limits_{i=1}^{d} k_i})/{2d}$.
\end{cor}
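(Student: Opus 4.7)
The plan is to combine the cited fact that $d_t(C_n)=2$ exactly when $4\mid n$ with Corollary~\ref{ccbicarteq} to get the total domatic lower bound, then match it with the trivial regularity upper bound, and finally read off the total domination number from Proposition~\ref{DiviReg}(iv).

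First, I would observe that each $C_{k_i}$ with $k_i\equiv 0\pmod 4$ is bipartite (any even cycle is bipartite) and satisfies $d_t(C_{k_i})=2$ by the cited characterization. Next, I would apply Corollary~\ref{ccbicarteq} with $n$ in that corollary taken to be our $d$ (which is a power of $2$), with parameter $d$ in that corollary taken to be $2$, and with $H_i = C_{k_i}$; this immediately gives
\[
d_t\!\left(\mathop{\cart}\limits_{i=1}^{d} C_{k_i}\right) \;\geq\; 2d.
\]

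Then I would note that the torus $T := \mathop{\cart}\limits_{i=1}^{d}C_{k_i}$ is $2d$-regular, so by the general inequality $d_t(G)\leq \delta(G)$ recorded in the Preliminaries, we have $d_t(T)\leq 2d$. Combining the two bounds yields $d_t(T)=2d$, i.e.\ $T$ is total domatically full.

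Finally, since $T$ is a $2d$-regular total domatically full graph, Proposition~\ref{DiviReg}(iv) gives
\[
\gamma_t(T) \;=\; \frac{|V(T)|}{2d} \;=\; \frac{\prod_{i=1}^{d} k_i}{2d},
\]
which is exactly the claimed value. There is no real obstacle here: the only things to check are that the hypotheses of Corollary~\ref{ccbicarteq} (that $d$ is a power of $2$ and that every $H_i$ is bipartite with $d_t(H_i)\geq 2$) are satisfied, both of which follow directly from the assumptions $d$ a power of $2$ and $k_i\equiv 0\pmod 4$.
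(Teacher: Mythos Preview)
Your proof is correct and follows exactly the approach the paper intends: the paper states that the corollary is an immediate consequence of Corollary~\ref{ccbicarteq} together with the fact that $d_t(C_n)=2$ iff $4\mid n$, and you have spelled out precisely this derivation, including the regularity upper bound and the appeal to Proposition~\ref{DiviReg}(iv) for $\gamma_t$.
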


In the remaining part of this section, we try to generalize this result to larger collections of tori.

In \cite{grav2}, Gravier independently obtained the total domination number of some tori by the concept of periodic tiling which mentioned in \cite{grav1}.

\begin{thm}\cite{grav2}\label{tdcartcycled}
Let $d, k_1,k_2,\ldots, k_{d}$ be positive integers such that $d\geq2$ and $k_i\equiv 0\pmod{4d}$, for $1\leq i\leq d$, we have
$\gamma_t(\mathop{\cart}\limits_{i=1}^{d} C_{k_i})=({\mathop\prod\limits_{i=1}^{d} k_i})/{2d}.$
Moreover, if $d$ is even and for any positive integer $k_i$, $1\leq i\leq d$ such that $k_i\equiv 0\pmod{2d}$,
then this equality still holds.
 \end{thm}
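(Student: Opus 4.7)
The plan is to show directly that the torus $T := \mathop{\cart}\limits_{i=1}^{d} C_{k_i}$ is total domatically full, i.e.\ $d_t(T) = 2d$. Since $T$ is $2d$-regular, once this is established Proposition~\ref{DiviReg}(iv) immediately yields $\gamma_t(T) = |V(T)|/(2d) = (\prod_{i=1}^{d} k_i)/(2d)$. The inequality $d_t(T) \leq \delta(T) = 2d$ is trivial, so the real task is to exhibit a $2d$-total-domatic coloring.

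I would construct an additive coloring $c(x_1, \ldots, x_d) = \sum_{i=1}^{d} f_i(x_i) \pmod{2d}$, in which one specific coordinate (say the $d$-th) is treated asymmetrically. Concretely, set $f_i(y) := i\,y \pmod{2d}$ for $i \in \{1, \ldots, d-1\}$, and let $f_d$ be the $4$-periodic function with $f_d(y) = 0$ for $y \equiv 0,1 \pmod 4$ and $f_d(y) = d$ for $y \equiv 2,3 \pmod 4$. The key verification is that for every vertex $(y_1,\ldots,y_d)$ the multiset of neighbor-differences $\{f_i(y_i+1) - f_i(y_i),\ f_i(y_i-1) - f_i(y_i) : i \in [d]\}$ equals $\mathbb{Z}_{2d}$: for each $i \leq d-1$ the two differences are $+i$ and $-i$, while a short case analysis on $y_d \bmod 4$ shows that the two differences contributed by the $d$-th direction always form the pair $\{0,d\}$ (using $-d \equiv d \pmod{2d}$). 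Taking the union gives $\{0,d\} \cup \bigcup_{i=1}^{d-1}\{i,\,2d-i\} = \mathbb{Z}_{2d}$, so the $2d$ neighbors of every vertex receive all $2d$ colors.

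It then remains to check well-definedness of each $f_i$ on $\mathbb{Z}_{k_i}$: the linear $f_i$ requires $2d \mid i\,k_i$, which holds whenever $2d \mid k_i$; the $4$-periodic $f_d$ requires $4 \mid k_d$. Under the first hypothesis $k_i \equiv 0 \pmod{4d}$, both constraints clearly hold for every $i$. Under the second hypothesis ($d$ even with $k_i \equiv 0 \pmod{2d}$), the first constraint is immediate, and $d$ being even forces $4 \mid 2d \mid k_d$, so the second holds as well. Any coordinate may play the role of the $d$-th coordinate; the symmetry of the hypotheses in the $k_i$ makes the choice immaterial.

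The main obstacle is precisely the need for a non-linear $f_d$. A purely linear attempt $c(\vec x) = \sum_i a_i x_i \pmod{2d}$ cannot succeed: its neighborhood-difference multiset is $\{\pm a_i : i \in [d]\}$, and since $0$ and $d$ are the only fixed points of negation in $\mathbb{Z}_{2d}$, either some $a_i \in \{0,d\}$ (in which case that pair contributes a single residue and $\{\pm a_i\}$ has size at most $2d-1$) or no $a_i$ does (in which case $\{0,d\}$ is missed entirely). The $4$-periodic $f_d$ supplies exactly this missing pair $\{0,d\}$ at every vertex while remaining consistent on $C_{k_d}$ whenever $4 \mid k_d$; once this observation is in hand, the rest of the argument is routine bookkeeping.
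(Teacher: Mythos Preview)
Your proposal is correct and is essentially identical to the paper's own argument. The paper cites this statement from Gravier, whose original proof used periodic tilings, but the paper then re-derives (and generalizes) it in Theorem~\ref{cccartcycle2d} using precisely the coloring you describe: $f(x)=\sum_{i=1}^{d-1} i\,x_i \pmod{2d}$ plus the $4$-periodic shift by $d$ in the last coordinate, followed by Proposition~\ref{DiviReg}(iv) to extract $\gamma_t$. Your well-definedness check and the remark on why a purely linear coloring fails are accurate additional commentary not spelled out in the paper.
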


By vertex transitivity of the tori, one can obtain a total domatic coloring of the tori mentioned in Theorem \ref{tdcartcycled} with $2d$ colors. Hence this tori are total domatically full.
We extend this to larger classes of tori. Moreover, our proof is much shorter.

\begin{thm}\label{cccartcycle2d}
Let $d, k_1,k_2,\ldots, k_d$  be positive integers. If  $k_d$ is congruent to $0\pmod{4}$  and the remaining $k_i$'s are congruent to $0\pmod{2d}$, then $d_t(\mathop{\cart}\limits_{i=1}^d C_{k_i})=2d$.
\end{thm}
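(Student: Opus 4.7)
The plan is to establish the upper bound trivially and then build an explicit total domatic coloring with $2d$ colors witnessing the lower bound. The torus $\cart_{i=1}^{d} C_{k_i}$ is $2d$-regular, so $d_t\bigl(\cart_{i=1}^d C_{k_i}\bigr)\le \delta\bigl(\cart_{i=1}^d C_{k_i}\bigr)=2d$, and only the matching lower bound needs work.

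For the lower bound I would define $f\colon \prod_{i=1}^{d}\mathbb{Z}_{k_i}\to\mathbb{Z}_{2d}$ by
\[
f(x_1,\ldots,x_d)\;=\;\Bigl(\sum_{i=1}^{d-1} i\,x_i\;+\;\psi(x_d)\Bigr)\bmod 2d,
\]
where $\psi\colon \mathbb{Z}_{k_d}\to\{0,d\}\subset\mathbb{Z}_{2d}$ is the length-$4$ periodic function $\psi(x)=0$ if $\lfloor x/2\rfloor$ is even and $\psi(x)=d$ if $\lfloor x/2\rfloor$ is odd (this is the ``$AABB$'' $2$-total-domatic coloring of $C_{k_d}$ lifted to $\{0,d\}$). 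Well-definedness of the linear part on each $C_{k_i}$ ($i<d$) needs $i\,k_i\equiv 0\pmod{2d}$, which follows from $k_i\equiv 0\pmod{2d}$; well-definedness of $\psi$ on $C_{k_d}$ needs $k_d\equiv 0\pmod 4$. Both match the hypotheses, so the coloring is legitimate on the torus.

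To verify $f$ is total domatic, I would fix a vertex $v=(x_1,\ldots,x_d)$ with color $c=f(v)$ and compute the multiset of colors on its $2d$ neighbors. For each $i\in\{1,\ldots,d-1\}$, the two neighbors along coordinate $i$ receive colors $c+i$ and $c-i\pmod{2d}$. As $i$ ranges over $\{1,\ldots,d-1\}$, the set $\{\pm i\pmod{2d}\}$ equals $\mathbb{Z}_{2d}\setminus\{0,d\}$, and so these $2(d-1)$ neighbors already realize every color in $\mathbb{Z}_{2d}\setminus\{c,c+d\}$, each exactly once. The two neighbors along the last coordinate contribute the colors $c+\bigl(\psi(x_d+1)-\psi(x_d)\bigr)$ and $c+\bigl(\psi(x_d-1)-\psi(x_d)\bigr)$. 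By inspecting the four residues of $x_d\bmod 4$, one checks that the pair of differences $\{\psi(x_d+1)-\psi(x_d),\,\psi(x_d-1)-\psi(x_d)\}$ is always $\{0,d\}$ in $\mathbb{Z}_{2d}$, so these last two neighbors supply exactly the missing colors $c$ and $c+d$.

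The main obstacle is simply choosing the right hybrid of a linear code on the first $d-1$ coordinates and a combinatorial code on the last: a fully linear coloring $\sum a_i x_i\pmod{2d}$ cannot work because $\{\pm a_1,\ldots,\pm a_d\}$ always misses $\{0,d\}$ whenever all $a_i$'s are chosen to make $\{\pm a_1,\ldots,\pm a_{d-1}\}$ hit the $2(d-1)$ elements of $\mathbb{Z}_{2d}\setminus\{0,d\}$. Routing the ``parity'' information $\{0,d\}$ through the one cycle whose length is only assumed divisible by $4$ (rather than $2d$) is exactly what makes the weaker divisibility hypothesis on $k_d$ suffice, and verifying the two $\psi$-differences pin down $\{0,d\}$ is the delicate but bounded piece of casework. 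The equality $\gamma_t\bigl(\cart_{i=1}^d C_{k_i}\bigr)=\bigl(\prod k_i\bigr)/2d$ then comes for free from part (iv) of Proposition~\ref{DiviReg}, since the torus is $2d$-regular and total domatically full.
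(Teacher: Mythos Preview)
Your proof is correct and is essentially identical to the paper's own argument: your coloring $f(x)=\sum_{i=1}^{d-1} i\,x_i+\psi(x_d)\pmod{2d}$ with $\psi$ the $AABB$ pattern in $\{0,d\}$ is exactly the paper's two-case definition, and your verification that the first $d-1$ coordinates supply $\mathbb{Z}_{2d}\setminus\{c,c+d\}$ while the last supplies $\{c,c+d\}$ matches the paper's computation. Your explicit check of well-definedness (that $k_i\equiv 0\pmod{2d}$ makes $i\,x_i$ well defined on $\mathbb{Z}_{k_i}$ and that $k_d\equiv 0\pmod 4$ makes $\psi$ well defined) is a welcome addition the paper leaves implicit.
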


\begin{proof}
Let $G\cong \mathop{\cart}\limits_{i=1}^d C_{k_i}$, where $d$, $k_i$'s are positive integers. Since $\delta(G)=2d$, it is enough to give a $2d$-total domatic coloring for $G$. Let the vertices of $G$ be $\{x=(x_1,x_2,\ldots,x_{d}): x_i\in\mathbb{Z}_{k_i}, 1\leq i\leq d\}$.
Now, let us define a coloring $f$ by
 \begin{center}
 $f(x)=\left\{\begin{array}{ll}
                                      \sum_{i=1}^{d-1}i\ x_i\ \pmod{2d}  & \mathrm{if}~ x_d \equiv 0 ~\mathrm{or}~1\pmod4 \\
                                      (\sum_{i=1}^{d-1}i\ x_i)+d\ \pmod{2d}  & \mathrm{if}~   x_d \equiv 2 ~\mathrm{or}~3\pmod4.                                                     \end{array}
                                                     \right. $
 \end{center}
Let $k$ be the color of the vertex $x=(x_1,x_2,\ldots,x_d)$ defined by $f$.
The set of neighbors of $x$ are $\{\{(x_1,\ldots,x_{i-1},x_i-1,x_{i+1},\ldots, x_d), (x_1,\ldots,x_{i-1},x_i+1,x_{i+1},\ldots,x_d)\}: 1\leq i\leq d\}$. The set of colors in the neighbors of $x$ are the union of $\{k-i, k+i \}$ along the dimension $i, 1\leq i\leq d-1$ and $\{k,k+d\}$ along the dimension $d$ which equals $\mathbb{Z}_{2d}$.
Thus each vertex contains vertices of all $2d$ colors $\mathbb{Z}_{2d}$ in its open neighborhood and $f$ is a $2d$-total domatic coloring for $G$. Hence, $d_t(\mathop{\cart}\limits_{i=1}^d C_{k_i})=2d$.
\end{proof}

In Theorem \ref{cccartcycle2d}, $d$-dimensional tori are total domatically full when a cycle of length is a multiple of $4$ and other cycles length are a multiple of $2d$ but this condition can be further generalized.
We obtain a sufficient condition for the total  domatically full tori in Corollary \ref{cccartcycle2dcor} which generalize the results mentioned in Corollary \ref{cccartcycle} and Theorem \ref{cccartcycle2d}.

\begin{cor}\label{cccartcycle2dcor}
Let $d=2^{p}q$, $q$ be an odd integer and $p\geq0$. If $2^{p}$ number of $k_i$'s are congruent to $0\pmod{4}$  and the remaining $k_i$'s are congruent to $0\pmod{2q}$, then $d_t(\mathop{\cart}\limits_{i=1}^d C_{k_i})=2d$.
\end{cor}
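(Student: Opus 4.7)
The plan is to reduce to Theorem~\ref{cccartcycle2d} by slicing the $d$-dimensional torus into $2^p$ bipartite sub-tori of dimension $q$ and then combining these via the bipartite product bound in Corollary~\ref{ccbicarteq}.

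First I would partition the $d = 2^{p}q$ factor cycles into $2^{p}$ groups, each containing exactly one cycle of length $\equiv 0\pmod 4$ together with $q-1$ cycles of length $\equiv 0\pmod{2q}$. Such a partition exists since there are precisely $2^{p}$ cycles of the first type to distribute among $2^p$ groups. Each group is then a $q$-dimensional torus that exactly matches the hypothesis of Theorem~\ref{cccartcycle2d} (with $q$ playing the role of $d$ there: one distinguished factor divisible by $4$, and the remaining $q-1$ factors divisible by $2q$). Hence each such sub-torus $T_j$ satisfies $d_t(T_j) = 2q$. Moreover every $T_j$ is bipartite: all its factor cycles have even length, because $q$ is odd makes $2q$ even, and $4$ is even.

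Next I would re-express $\mathop{\cart}\limits_{i=1}^{d} C_{k_i}$ as $\mathop{\cart}\limits_{j=1}^{2^{p}} T_j$ and apply Corollary~\ref{ccbicarteq} with $n = 2^{p}$ (a power of $2$) and with the value $2q$ playing the role of the parameter $d$ in that corollary. This yields
\[
d_t\!\left(\mathop{\cart}\limits_{i=1}^{d} C_{k_i}\right) \;=\; d_t\!\left(\mathop{\cart}\limits_{j=1}^{2^{p}} T_j\right) \;\geq\; 2^{p}\cdot 2q \;=\; 2d.
\]
The matching upper bound $d_t \leq \delta = 2d$ is immediate since the torus is $2d$-regular, so $d_t = 2d$ follows.

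There is no genuine obstacle once the right grouping is spotted; the only delicate point (and the reason the statement works) is that each $T_j$ must be verified bipartite in order for Corollary~\ref{ccbicarteq} to apply. Two sanity checks are worth noting: when $p = 0$ (so $d = q$) the argument degenerates to a direct use of Theorem~\ref{cccartcycle2d}, and when $q = 1$ (so every $k_i \equiv 0 \pmod 4$) it degenerates to Corollary~\ref{cccartcycle}, confirming consistency with the two known limiting cases.
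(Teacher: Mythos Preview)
Your proposal is correct and follows essentially the same route as the paper: group the $d$ cycles into $2^{p}$ blocks of $q$ cycles each (one cycle of length divisible by $4$ per block), invoke Theorem~\ref{cccartcycle2d} on each block to get $d_t(T_j)=2q$, and then apply Corollary~\ref{ccbicarteq} to the $2^{p}$ bipartite blocks. Your write-up is in fact slightly more careful than the paper's, since you explicitly verify the bipartiteness of each $T_j$ (needed for Corollary~\ref{ccbicarteq}) and state the trivial upper bound $d_t\le\delta=2d$, both of which the paper leaves implicit.
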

\begin{proof}
Let $G\cong \mathop{\cart}\limits_{i=1}^d C_{k_i}$, where $d$, $k_i$'s are positive integers. For  $p=0$, the proof follows from Theorem \ref{cccartcycle2d}.
Let us consider $p\geq1$.
Since $\mathop{\cart}\limits_{i=1}^{d} C_{k_i}$ is transitive with respect to the product, among $2^{p}$ number of $k_i$'s are congruent to $0\pmod{4}$, without loss of generality, choose those $k_i$'s are $k_{q}, k_{2q},\ldots,k_{d}$.
Now, split the graph $\mathop{\cart}\limits_{i=1}^{d} C_{k_i}$ into product of $2^{p}$ smaller product of $q$ graphs each as $(\mathop{\cart}\limits_{i=1}^{q} C_{k_i})\cart(\mathop{\cart}\limits_{i=q+1}^{2q} C_{k_{i}})\cart\cdots\cart(\mathop{\cart}\limits_{i=d-q+1}^{d} C_{k_{i}})\cong\mathop{\cart}\limits_{j=1}^{2^{p}} G_j$, where  $d=2^pq$.
By Theorem \ref{cccartcycle2d}, each smaller product graph $G_j$ has the total domatic number $2q$ for $1\leq j\leq 2^{p}$.
By Corollary \ref{ccbicarteq}, we have $d_t(\mathop{\cart}\limits_{i=1}^{d}C_{k_i})=d_t(\mathop{\cart}\limits_{j=1}^{2^{p}} G_j)=2^{p}2q=2d$.
\end{proof}

The Corollary \ref{tdcartcycle} is an immediate consequence of Corollary \ref{cccartcycle2dcor} and (iv) of Proposition \ref{DiviReg} which turns to be a generalization of the result given in Corollary \ref{cccartcycle} and a result due to S. Gravier \cite{grav2} for the $d$-dimensional tori given in Theorem \ref{tdcartcycled}.

\begin{cor}\label{tdcartcycle}
Let $d=2^{p}q$, $q$ be an odd integer and $p\geq0$. If $2^{p}$ number of $k_i$'s are congruent to $0\pmod{4}$  and the remaining $k_i$'s are congruent to $0\pmod{2q}$, then
$\gamma_t(\mathop{\cart}\limits_{i=1}^d C_{k_i})=({\mathop\prod\limits_{i=1}^{ d} k_i})/{2d}$.
\end{cor}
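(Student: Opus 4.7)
The plan is to observe that this is essentially a one-line deduction from two already-proved results, so the proof will just stitch them together carefully.

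First I would note that $G = \mathop{\cart}\limits_{i=1}^d C_{k_i}$ is a $2d$-regular graph (every vertex has exactly two neighbors along each of the $d$ cycle factors), and its vertex count is $|V(G)| = \prod_{i=1}^d k_i$. Under the hypothesis on the $k_i$'s, Corollary \ref{cccartcycle2dcor} asserts that $d_t(G) = 2d$, so $G$ attains the bound $d_t(G) \leq \delta(G) = 2d$ — in other words $G$ is total domatically full.

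Next I would invoke part (iv) of Proposition \ref{DiviReg}, which says that for any $r$-regular total domatically full graph $G$, the total domination number equals $|V(G)|/r$. Applying this with $r = 2d$ gives
\[
\gamma_t\!\left(\mathop{\cart}\limits_{i=1}^d C_{k_i}\right) \;=\; \frac{|V(G)|}{2d} \;=\; \frac{\prod_{i=1}^d k_i}{2d},
\]
which is the claim. For completeness I would also remark that the matching lower bound $\gamma_t(G) \geq |V(G)|/\Delta(G)$ (which is the content used in proving Proposition \ref{DiviReg}(iv)) is just the standard degree bound, so no extra work is needed.

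There is no real obstacle here; the substantive combinatorial content has already been carried out in Corollary \ref{cccartcycle2dcor} (and ultimately in Theorem \ref{cccartcycle2d}, whose explicit $2d$-coloring $f$ is what powers everything). The only thing I would be careful about is making sure the hypothesis on the $k_i$'s in the corollary matches exactly the hypothesis in Corollary \ref{cccartcycle2dcor} — namely that $d = 2^p q$ with $q$ odd, that $2^p$ of the $k_i$'s are $\equiv 0 \pmod 4$, and the rest are $\equiv 0 \pmod{2q}$ — so that the total domatic conclusion is legitimately available to quote.
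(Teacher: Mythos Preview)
Your proof is correct and follows exactly the same route as the paper: the paper states that Corollary~\ref{tdcartcycle} is an immediate consequence of Corollary~\ref{cccartcycle2dcor} and (iv) of Proposition~\ref{DiviReg}, which is precisely the two-step deduction you spell out.
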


Note that, the family of tori given in Corollary \ref{tdcartcycle} contains the family of tori as mentioned in Theorem \ref{tdcartcycled}.
Suppose the dimension of the torus is $12$,
Corollary \ref{tdcartcycle} find $\gamma_t$ for the tori $\mathop{\cart}\limits_{i=1}^{12} C_{p_i}$, where four $p_i$'s are congruent to $0\pmod{4}$
 and the remaining $p_i$'s are congruent to $0\pmod{6}$ but Theorem \ref{tdcartcycled} finds $\gamma_t$ for the torus $\mathop{\cart}\limits_{i=1}^{12} C_{k_i}$, where each $k_i\equiv0\pmod{24}$.

Corollary \ref{ccbicartHam} is an immediate consequence of Corollary \ref{cccartcycle2dcor}, \ref{tdcartcycle}.

\begin{cor} \label{ccbicartHam}
Let $d=2^{p}q$, $q$ be an odd integer and $p\geq0$ and let $G_i$ be a Hamiltonian graph, $1\leq i\leq d$. If $2^{p}$ number of $G_i$'s have $|V(G_i)|\equiv 0\pmod{4}$ and the remaining $G_i$'s have $|V(G_i)|\equiv0\pmod{2q}$,
then $d_t(\mathop{\cart}\limits_{i=1}^{d}G_i)\geq2{d}$ and $\gamma_t(\mathop{\cart}\limits_{i=1}^d G_i)\leq({\mathop\prod\limits_{i=1}^{ d} |V(G_i)|})/{2d}$.
\end{cor}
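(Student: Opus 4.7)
The plan is to reduce the statement to Corollary \ref{cccartcycle2dcor} by extracting a Hamilton cycle from each $G_i$ and invoking the monotonicity of total domatic colorings under taking spanning subgraphs.

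First, since each $G_i$ is Hamiltonian, let $C_i$ be a Hamilton cycle of $G_i$, so $C_i\cong C_{|V(G_i)|}$ is a spanning subgraph of $G_i$. Then $\mathop{\cart}\limits_{i=1}^{d} C_i$ is a spanning subgraph of $\mathop{\cart}\limits_{i=1}^{d} G_i$, because Cartesian product preserves spanning subgraphs on each coordinate. By the hypothesis on the parities of the $|V(G_i)|$, exactly $2^p$ of the cycle lengths are divisible by $4$ and the remaining $q\cdot 2^p - 2^p$ wait, there are $d - 2^p = 2^p(q-1)$ of them, and these are divisible by $2q$. This is precisely the hypothesis of Corollary \ref{cccartcycle2dcor} applied to the cycle lengths $k_i = |V(G_i)|$, which therefore yields a total domatic coloring of $\mathop{\cart}\limits_{i=1}^{d} C_i$ with $2d$ color classes.

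Next, observe that if $f$ is a total domatic coloring of a spanning subgraph $H'$ of $H$, then $f$ is automatically a total domatic coloring of $H$: every open neighborhood in $H$ contains the open neighborhood in $H'$, so if the latter already sees all colors, so does the former. Applying this with $H' = \mathop{\cart}\limits_{i=1}^{d} C_i$ and $H = \mathop{\cart}\limits_{i=1}^{d} G_i$ gives the inequality $d_t(\mathop{\cart}\limits_{i=1}^{d} G_i) \geq 2d$.

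Finally, for the total domination bound, fix a total domatic $2d$-coloring of $\mathop{\cart}\limits_{i=1}^{d} G_i$. Each of the $2d$ color classes is a total dominating set, and they partition $V(\mathop{\cart}\limits_{i=1}^{d} G_i)$, whose cardinality is $\prod_{i=1}^{d} |V(G_i)|$. By averaging, the smallest class has size at most $\left(\prod_{i=1}^{d} |V(G_i)|\right)/(2d)$, which gives $\gamma_t(\mathop{\cart}\limits_{i=1}^{d} G_i) \leq \left(\prod_{i=1}^{d} |V(G_i)|\right)/(2d)$. There is no real obstacle here beyond correctly bookkeeping the divisibility hypotheses so as to match the assumptions of Corollary \ref{cccartcycle2dcor}; the monotonicity argument and the averaging step are both routine.
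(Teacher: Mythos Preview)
Your proof is correct and follows the same route as the paper: extract a spanning Hamilton cycle from each $G_i$, apply Corollary~\ref{cccartcycle2dcor} to the resulting torus, and lift the coloring back via the spanning-subgraph monotonicity. The only cosmetic difference is that for the $\gamma_t$ bound you argue by averaging over the $2d$ color classes, whereas the paper cites Corollary~\ref{tdcartcycle} directly (which amounts to the same thing, since a total dominating set of the spanning torus is also one for $\mathop{\cart}_{i=1}^{d} G_i$).
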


Next, we obtain a sufficient condition for the domatically full tori.
\begin{thm}\label{cccartcycle2d+1}
Let $d, k_1,k_2,\ldots, k_d$  be positive integers. If  each $k_i$ is congruent to $0\pmod{2d+1}$, then $d(\mathop{\cart}\limits_{i=1}^d C_{k_i})=2d+1$.
\end{thm}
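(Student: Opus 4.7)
The plan is to mirror the strategy used for Theorem~\ref{cccartcycle2d}, but with the crucial change that we are now working with closed neighborhoods, which gives us $2d+1$ colors to distribute among $\{x\} \cup N(x)$ instead of only $2d$ colors among $N(x)$.

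First I would establish the upper bound. Since $G = \mathop{\cart}\limits_{i=1}^d C_{k_i}$ is $2d$-regular, we have $d(G) \leq \delta(G)+1 = 2d+1$. The entire work is in the matching lower bound.

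For the lower bound, I would exhibit an explicit $(2d+1)$-domatic coloring. Label the vertices of $G$ as $x = (x_1, x_2, \ldots, x_d)$ with $x_i \in \mathbb{Z}_{k_i}$, and define
\begin{center}
$f(x) = \sum_{i=1}^{d} i\, x_i \pmod{2d+1}.$
\end{center}
The hypothesis $k_i \equiv 0 \pmod{2d+1}$ ensures that $i \cdot k_i \equiv 0 \pmod{2d+1}$ for every $i$, so $f$ is well-defined on $\mathbb{Z}_{k_1} \times \cdots \times \mathbb{Z}_{k_d}$.

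Next I would verify that every closed neighborhood sees all $2d+1$ colors. Fix a vertex $x$ with $f(x) = k$. Its neighbors in the $i$-th direction are obtained by replacing $x_i$ with $x_i \pm 1$, and these have colors $k+i$ and $k-i$ modulo $2d+1$. Together with $x$ itself, the closed neighborhood sees the multiset of colors $\{k\} \cup \{k+i, k-i : 1 \leq i \leq d\}$. The key observation is that the residues $\{0, \pm 1, \pm 2, \ldots, \pm d\}$ form a complete residue system modulo $2d+1$, since $-i \equiv 2d+1-i \pmod{2d+1}$ and the values $0, 1, \ldots, d, d+1, \ldots, 2d$ are all distinct. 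Hence $N[x]$ contains all $2d+1$ colors, so $f$ is a domatic $(2d+1)$-coloring and $d(G) \geq 2d+1$.

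There is essentially no obstacle; the only subtle point to emphasise is that each cycle length must be divisible by $2d+1$ (not merely by some smaller number) in order for the linear functional $\sum i\,x_i$ to descend to the quotient $\mathbb{Z}_{k_i}$ for every coefficient $i \in \{1,\ldots,d\}$. This is exactly what the hypothesis supplies, and it is also what makes the $2d+1$ residues $\{0, \pm 1, \ldots, \pm d\}$ pairwise distinct modulo $2d+1$, which is the combinatorial heart of the argument.
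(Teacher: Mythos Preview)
Your proposal is correct and essentially identical to the paper's own proof: both use the linear coloring $f(x)=\sum_{i=1}^d i\,x_i \pmod{2d+1}$ and observe that the closed neighborhood of a vertex colored $k$ receives precisely the colors $\{k\}\cup\{k\pm i:1\le i\le d\}=\mathbb{Z}_{2d+1}$. If anything, you are slightly more explicit than the paper about why the hypothesis $k_i\equiv 0\pmod{2d+1}$ is needed for $f$ to be well-defined.
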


\begin{proof}
Let $G\cong \mathop{\cart}\limits_{i=1}^d C_{k_i}$, where $d$, $k_i$'s are positive integers and each $k_i\equiv 0\pmod{2d+1}$. Since $\delta(G)+1=2d+1$, it is enough to give a $(2d+1)$-domatic coloring for $G$. Let the vertices of $G$ be $\{x=(x_1,x_2,\ldots,x_{d}): x_i\in\mathbb{Z}_{k_i}, 1\leq i\leq d\}$.
Now, let us define a coloring $f$ by
 \begin{center}
 $f(x)= \sum_{i=1}^{d}i\ x_i\ \pmod{2d+1}$.
 \end{center}
Let $k$ be the color of the vertex $x=(x_1,x_2,\ldots,x_d)$ defined by $f$.
The set of neighbors of $x$ are $\{\{(x_1,\ldots,x_{i-1},x_i-1,x_{i+1},\ldots, x_d), (x_1,\ldots,x_{i-1},x_i+1,x_{i+1},\ldots,x_d)\}: 1\leq i\leq d\}$.
The set of colors in the neighbors of $x$ are $\{k-i, k+i \}$ along the dimension $i, 1\leq i\leq d$ which equals $\mathbb{Z}_{2d+1}\setminus\{k\}$.
Thus each vertex contains vertices of all $2d+1$ colors $\mathbb{Z}_{2d+1}$ in its closed neighborhood. Hence $f$ is a $(2d+1)$-domatic coloring for $G$ and $d(\mathop{\cart}\limits_{i=1}^d C_{k_i})=2d+1$.
\end{proof}

As a consequence of Theorem \ref{cccartcycle2d+1} and Corollary \ref{domcor}, we get Corollary \ref{domcycle2d+1} which re-derive the result
proved by Klav\v{z}ar and Seifter \cite{sand}.

\begin{cor}\cite{sand}\label{domcycle2d+1}
Let $d, k_1,k_2,\ldots, k_d$  be positive integers. If  each $k_i$ is congruent to $0\pmod{2d+1}$, then $\gamma(\mathop{\cart}\limits_{i=1}^{d} C_{k_i})=({\mathop\prod\limits_{i=1}^{d} k_i})/({2d+1})$.
\end{cor}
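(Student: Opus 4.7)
The plan is to derive this corollary as an immediate composition of Theorem \ref{cccartcycle2d+1} with Corollary \ref{domcor}. Set $G := \mathop{\cart}\limits_{i=1}^{d} C_{k_i}$ and note that $G$ is $2d$-regular with $|V(G)| = \prod_{i=1}^{d} k_i$; moreover, $2d+1$ divides $|V(G)|$ since each factor $k_i$ is divisible by $2d+1$, so the right-hand side of the claimed equality is a positive integer.

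Under the hypothesis that each $k_i \equiv 0 \pmod{2d+1}$, Theorem \ref{cccartcycle2d+1} already gives $d(G) = 2d+1$, so $G$ is a $2d$-regular domatically full graph. Applying Corollary \ref{domcor} to this $G$ then yields $\gamma(G) = |V(G)|/(2d+1) = \bigl(\prod_{i=1}^{d} k_i\bigr)/(2d+1)$, which is exactly the claimed formula.

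Since both prerequisites have already been established earlier in the paper, there is no substantive obstacle in the deduction. If one preferred a self-contained argument in place of invoking Corollary \ref{domcor}, the remark would be: the lower bound $\gamma(G) \geq |V(G)|/(2d+1)$ is the standard sphere-packing bound, as each vertex of a dominating set covers at most $\Delta(G)+1 = 2d+1$ vertices (itself and its $2d$ neighbors); and the matching upper bound follows from Theorem \ref{cccartcycle2d+1} by selecting the smallest class among the $2d+1$ color classes of the domatic partition produced there, whose size is at most $|V(G)|/(2d+1)$ by pigeonhole and which is a dominating set by definition of a domatic coloring. The only ingredient doing real work is the explicit coloring $f(x) = \sum_{i=1}^{d} i\, x_i \pmod{2d+1}$ of Theorem \ref{cccartcycle2d+1}, which has already been verified.
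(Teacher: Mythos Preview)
Your proof is correct and follows exactly the same approach as the paper, which simply states that Corollary~\ref{domcycle2d+1} is a consequence of Theorem~\ref{cccartcycle2d+1} and Corollary~\ref{domcor}. Your additional self-contained remark unpacking Corollary~\ref{domcor} via the sphere-packing lower bound and pigeonhole upper bound is also valid and adds nothing not already implicit in the paper's argument.
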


Corollary \ref{ccbicartHamdom} is an immediate consequence of Theorem \ref{cccartcycle2d+1} and Corollary \ref{domcycle2d+1}.

\begin{cor} \label{ccbicartHamdom}
Let $d$  be a positive integer and let $G_i$ be a Hamiltonian graph, $1\leq i\leq d$. If  each $G_i$ have $|V(G_i)|\equiv0\pmod{2d+1}$, then $d(\mathop{\cart}\limits_{i=1}^{d}G_i)\geq2{d}+1$ and $\gamma(\mathop{\cart}\limits_{i=1}^d G_i)\leq({\mathop\prod\limits_{i=1}^{ d} |V(G_i)|})/({2d+1})$.
\end{cor}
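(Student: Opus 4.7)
The plan is to reduce the claim to the torus case already handled in Theorem~\ref{cccartcycle2d+1} and Corollary~\ref{domcycle2d+1}, by exploiting the fact that the Cartesian product of spanning subgraphs of the $G_i$ is a spanning subgraph of $\mathop{\cart}\limits_{i=1}^{d} G_i$.

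First, since each $G_i$ is Hamiltonian on $|V(G_i)|$ vertices, it contains the cycle $C_{|V(G_i)|}$ as a spanning subgraph. Taking the Cartesian product preserves this containment on the same vertex set, so $T := \mathop{\cart}\limits_{i=1}^{d} C_{|V(G_i)|}$ is a spanning subgraph of $H := \mathop{\cart}\limits_{i=1}^{d} G_i$. The hypothesis $|V(G_i)| \equiv 0 \pmod{2d+1}$ for every $i$ means $T$ is exactly a torus of the form treated in Theorem~\ref{cccartcycle2d+1}, giving $d(T) = 2d+1$.

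Next I would invoke the elementary monotonicity fact: if $H'$ is a spanning subgraph of $H$, then $d(H) \geq d(H')$. Indeed, $V(H') = V(H)$ and $N_{H'}[v] \subseteq N_H[v]$ for every vertex $v$, so any partition of $V(H')$ into dominating sets of $H'$ is automatically a partition of $V(H)$ into dominating sets of $H$. Applying this to $T \subseteq H$ yields $d(H) \geq d(T) = 2d+1$, which is the first conclusion.

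For the second conclusion, I would observe that any domatic $(2d+1)$-coloring of $H$ partitions its $\prod_{i=1}^{d} |V(G_i)|$ vertices into $2d+1$ dominating sets, so the smallest of these has at most $\bigl(\prod_{i=1}^{d} |V(G_i)|\bigr)/(2d+1)$ vertices, proving $\gamma(H) \leq \bigl(\prod_{i=1}^{d} |V(G_i)|\bigr)/(2d+1)$. (Alternatively, one can cite Corollary~\ref{domcycle2d+1} directly on $T$ and note that a dominating set of $T$ dominates $H$ by the same spanning-subgraph argument.) There is essentially no obstacle here; the only point requiring a line of justification is the monotonicity of the domatic number under adding edges to a fixed vertex set, and this is immediate from the definition.
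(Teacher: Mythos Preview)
Your proposal is correct and follows exactly the approach the paper intends: the paper states this corollary as an immediate consequence of Theorem~\ref{cccartcycle2d+1} and Corollary~\ref{domcycle2d+1}, and your spanning-subgraph argument (each Hamiltonian $G_i$ contains a spanning $C_{|V(G_i)|}$, hence the torus is a spanning subgraph of $\mathop{\cart}\limits_{i=1}^{d}G_i$, and domatic number is monotone under adding edges) is precisely how that immediacy is realized. The pigeonhole step for $\gamma$ is also the standard one implicit in the paper's treatment.
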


\section{Conclusion and open problems}

Accepting the invitation by Chen, Kim, Tait and Verstraete \cite{chen} to
determine any relationships between $d_t(G \cart G)$ and $d_t(G)$, we
started this investigation aiming to find good lower and upper bounds to
$d_t(G \cart G)$ in terms of $d_t(G)$. In this paper, we have made
improvements to the easy lower bound $d_t(G \cart G) \geq d_t(G)$.
Firstly we showed that if $\delta(G) \geq 1$, then $d_t(G \cart G) \geq
d(G)$. We also showed
that $d_t(G \cart G) \geq 2d_t(G)$ if $G$ is bipartite.  Bipartiteness
is necessary for the above lower bound. We can show the existence of infinite
families of non-bipartite graphs where $d_t(G \cart G) = d_t(G) +
\sqrt{2d_t(G)}$.
Nevertheless, we have indirectly used this result for non-bipartite graphs in
the form $d_t(G \cart G) \geq 2d_t(G')$, where $G'$ is a spanning
bipartite subgraph of $G$.

In contrast, we haven't been able to prove any upper bound for $d_t(G
\cart G)$ in terms of $d_t(G)$. We know of graphs $G$ ($G = K_{2n+1}$ for
example) where $d_t(G \cart G) = 2d_t(G) + 1$. We conjecture that it is
the maximum possible.

\begin{conj}
\label{conjUpperBound}
For any two graphs $G$ and $H$ without an isolated vertex,
\begin{center}
$d_t(G\cart H)\leq 2\max\{d_t(G),d_t(H)\}+1.$
\end{center}
\end{conj}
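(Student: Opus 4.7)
The plan is a proof by contradiction extending the projection argument of Proposition~\ref{propnew} from $G\cart K_2$ to general $G\cart H$. Set $k=\max\{d_t(G),d_t(H)\}$ and suppose, toward a contradiction, that $f:V(G\cart H)\to[2k+2]$ is a total domatic coloring. The aim is to partition $V(G)$ into $k+1$ total dominating sets, contradicting $d_t(G)\le k$; by symmetry one could instead aim at $V(H)$. For $v\in V(H)$ and $u\in V(G)$, write $\pi_v(u):=f(u,v)$ and $\sigma_u(v):=f(u,v)$ for the restrictions of $f$ to the fiber $G_v$ and layer $H_u$ respectively.

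The central step is pair-and-project. Fix $v_1\in V(H)$, partition $[2k+2]$ into $k+1$ pairs $\{c_{2i-1},c_{2i}\}$, and define $T_i:=\pi_{v_1}^{-1}(c_{2i-1})\cup\pi_{v_1}^{-1}(c_{2i})\subseteq V(G)$. Since $\pi_{v_1}$ is defined on all of $V(G)$, the sets $T_1,\ldots,T_{k+1}$ form a partition of $V(G)$, so it suffices to show each $T_i$ is a total dominating set of $G$. For $u\in V(G)$, the vertex $(u,v_1)$ must see both $c_{2i-1}$ and $c_{2i}$ in its $G\cart H$-neighborhood, and as long as at least one of the two colors is witnessed by some $u'\in N_G(u)$ with $f(u',v_1)\in\{c_{2i-1},c_{2i}\}$, that $u'$ lies in $T_i$ and dominates $u$. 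For $H=K_2$ this holds automatically (there is only one $H$-neighbor, carrying at most one of the two colors), recovering Proposition~\ref{propnew}. In general, the step fails precisely when $\{c_{2i-1},c_{2i}\}\subseteq\sigma_u(N_H(v_1))$, in which case I would call the pair \emph{$G$-obstructed at $(u,v_1)$}. To handle obstructions, I would package them into a conflict graph $\Gamma(v_1)$ on $[2k+2]$, placing an edge $\{c,c'\}$ whenever some $u$ makes $\{c,c'\}$ $G$-obstructed. Each $u$ contributes a clique of size at most $\deg_H(v_1)$ to $\Gamma(v_1)$, since $|\sigma_u(N_H(v_1))|\le\deg_H(v_1)$. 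The pair-and-project step succeeds whenever the chosen pairing is a perfect matching in the complement $\overline{\Gamma(v_1)}$; a symmetric construction based on a layer $u_1\in V(G)$ gives a dual conflict graph on $[2k+2]$, and one may run whichever side yields the sparser conflict graph.

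The main obstacle is that the conflict graph can be forced to be very dense. Although each individual $u$ contributes only a clique of size at most $\deg_H(v_1)$, different $u$'s may contribute incompatible edges, so $\Gamma(v_1)$ can be close to complete for every choice of $v_1$, and then no perfect matching in its complement exists. A purely one-sided projection is therefore insufficient in the general case. Overcoming this appears to require either a global averaging argument combining $d_t(G)\le k$ and $d_t(H)\le k$ simultaneously---for example, a double count over triples $(u,v,\{c,c'\})$ that pits $G$-obstructions against $H$-obstructions---or a divisibility/parity argument in the spirit of Proposition~\ref{DiviReg}(iii), ruling out $m=2k+2$ via modular conditions on color-class sizes. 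The extremal family $G\cong H\cong K_{2n+1}$, where $d_t(G\cart H)=2k+1$ is attained, tells us any successful proof must be tight at precisely this $+1$ parity gap; the persistence of that gap is plausibly why the conjecture remains open.
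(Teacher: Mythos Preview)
The statement you are attempting to prove is Conjecture~\ref{conjUpperBound}, and the paper does not contain a proof of it; it is explicitly posed as an open problem in the concluding section. So there is no ``paper's own proof'' to compare against.

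Your write-up is not a proof either, and you say as much: the pair-and-project argument you describe genuinely works only when $\deg_H(v_1)=1$, which is exactly the $H=K_2$ case of Proposition~\ref{propnew}. Your diagnosis of the obstruction is accurate. The step ``at least one of $c_{2i-1},c_{2i}$ is witnessed in $G_{v_1}$'' can fail for every pairing simultaneously once $\deg_H(v_1)\ge 2$, and you correctly note that the conflict graph $\Gamma(v_1)$ can be made dense enough to kill every perfect matching in its complement. The suggested remedies (a double-count balancing $G$- and $H$-obstructions, or a divisibility argument \`a la Proposition~\ref{DiviReg}(iii)) are reasonable directions but are not carried out, and there is no evidence either one closes the gap. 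In particular, the divisibility route seems unlikely to help in general, since Proposition~\ref{DiviReg}(iii) applies only to \emph{regular} total domatically full graphs, whereas the conjecture concerns arbitrary $G$ and $H$.

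In short: the paper offers no proof, your proposal offers no proof, and your own final paragraph already explains why. What you have written is a thoughtful discussion of why the obvious extension of Proposition~\ref{propnew} breaks down, not a proof of the conjecture.
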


A weaker form of the above conjecture, $d_t(G\cart H)\leq
2\max\{d(G),d(H)\}+1$, is also an interesting open problem.  As far as we know,
the version of Conjecture~\ref{conjUpperBound} for domatic number is also open.

\begin{conj}
\label{conjUpperBoundDom}
For any two graphs $G$ and $H$ without an isolated vertex,
\begin{center}
$d(G\cart H)\leq 2\max\{d(G),d(H)\}+1.$
\end{center}
\end{conj}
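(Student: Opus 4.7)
The plan is to extend the projection argument from the proof of Proposition~\ref{propnew} (which settles the case $H\cong K_2$) to arbitrary $H$. Assume toward a contradiction that $f\colon V(G\cart H)\to[k]$ is a domatic $k$-coloring with color classes $D_1,\dots,D_k$ and that $k\geq 2\max\{d(G),d(H)\}+2$. Let $\pi_G$ and $\pi_H$ denote the coordinate projections.

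First I would establish a general projection lemma. Set $S^G_i:=\pi_G(D_i)=\{x\in V(G)\colon (x,y)\in D_i\text{ for some }y\}$ and define $S^H_i$ analogously. Then every $S^G_i$ is a dominating set of $G$: fix $x\in V(G)$ and any $y\in V(H)$; since $f$ is domatic, the vertex $(x,y)$ either lies in $D_i$ (giving $x\in S^G_i$) or has a neighbor in $D_i$, which is either $(x',y)$ with $x'\in N_G(x)$ (yielding a $G$-neighbor $x'\in S^G_i$) or $(x,y')$ (yielding $x\in S^G_i$). In every case $x$ is dominated by $S^G_i$ in $G$. Symmetrically, every $S^H_i$ dominates $H$. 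This produces $k$ dominating sets in each factor, but they are generally not pairwise disjoint, so they do not immediately witness $d(G)\geq k$ or $d(H)\geq k$.

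Next I would try to combine these overlapping dominating sets into a domatic partition of one factor of size $\lfloor k/2\rfloor$, which would contradict $\lfloor k/2\rfloor\geq\max\{d(G),d(H)\}+1$. Partition $[k]$ into $\lfloor k/2\rfloor$ pairs (plus one leftover if $k$ is odd); for each pair $P$, the union $\bigcup_{i\in P}S^G_i$ is still dominating in $G$, and similarly for $H$. The residual task is to choose the pairing and then assign each $x\in V(G)$ (or each $y\in V(H)$) to exactly one pair so that the resulting classes partition the vertex set while each class remains dominating. A natural formalism is the \emph{column-profile hypergraph} on the vertex set $[k]$ whose hyperedges are the sets $\Pi(x):=\{i\in[k]\colon x\in S^G_i\}$ (equivalently, the palettes used by the $H$-layers $H_x$), together with the analogous row-profile hypergraph on $[k]$ derived from $H$. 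The question becomes whether one can find a pairing of $[k]$ compatible with the column-profile hypergraph for $G$ or with the row-profile hypergraph for $H$, in the sense that each hyperedge can be assigned a unique pair it meets without destroying domination inside the resulting classes. I would attempt this either by a Hall-type matching argument on the profile bipartite structure or by a probabilistic pairing combined with a local-correction step.

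The hard part---and what I expect to be the main obstacle---is precisely this disjointness/assignment step. A vertex $x$ whose $H$-layer uses many colors appears in many of the $S^G_i$'s simultaneously, and naive reassignment can knock the domination property out of a class. The paper's own comment after Proposition~\ref{propnew}---that even the question ``does any graph $G$ satisfy $d(G\cart K_2)>2d(G)$?'' is open---shows that the disjointness obstruction is already subtle in the smallest case, so the conjecture is likely genuinely delicate. A complete proof will probably have to exploit the two projections \emph{jointly}, arguing roughly that if the column profiles of $G$ overlap too badly for every pairing of $[k]$, then the row profiles of $H$ must be well-separated for some pairing, rather than analyzing one factor in isolation.
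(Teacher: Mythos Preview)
This statement is a \emph{conjecture} in the paper, not a theorem; the paper explicitly says it is open and provides no proof. Consequently there is no ``paper's own proof'' against which to compare your proposal, and you should not expect to find one.

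Your write-up is also not a proof: it is an outline of a strategy, and you yourself correctly flag the point at which it breaks down. The projection lemma (each $S^G_i$ and $S^H_i$ is a dominating set of the respective factor) is fine, and is indeed the natural generalization of the argument in Proposition~\ref{propnew}. The difficulty is exactly where you place it: from $k$ overlapping dominating sets one cannot in general manufacture a domatic partition into $\lfloor k/2\rfloor$ parts. Your proposed ``pair the colors and then reassign vertices'' step is not an argument but a hope; nothing in the outline guarantees that a pairing plus a selection rule can simultaneously achieve disjointness and preserve domination, and the Hall-type or probabilistic fixes you mention are not substantiated. The paper's own remark that even the special case $H\cong K_2$ (whether $d(G\cart K_2)>2d(G)$ can ever occur) is unresolved confirms that this overlap obstruction is the crux and is genuinely open.

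In short: the paper does not prove this statement, your proposal does not prove it either, and the gap you identify in your own plan is the same gap that makes it a conjecture.
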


If this upper bound is true, then it is also tight. To see that, consider the cycle $C_n$, where $n$ is a multiple of $5$ but not $3$.
By Theorem \ref{cccartcycle2d+1}, we have $d(C_n\cart C_n)=5=2d(C_n)+1$.

\subsection*{Acknowledgment}
\small First author's research was supported by Post Doctoral Fellowship at Indian Institute of Technology, Palakkad.

\renewcommand{\baselinestretch}{0.1}

\bibliographystyle{ams}
\bibliography{bibtex}  
\end{titlepage}
\end{document}